\newtheorem{precor}{{\bf Corollary}}
\newenvironment{cor}{\begin{precor}{\hspace{-0.5
               em}{\bf.\ }}}{\end{precor}}
\newtheorem{precon}{{\bf Conjecture}}
\newtheorem{predefin}{{\bf Definition}}
\newenvironment{defin}[1]{\begin{predefin}{\hspace{-0.5
                   em}{\bf.\ }}{\rm
#1}\hfill{$\spadesuit$}}{\end{predefin}}
\newtheorem{preexm}{{\bf Example}}
\newenvironment{exm}[1]{\begin{preexm}{\hspace{-0.5
                  em}{\bf.\ }}{\rm #1}\hfill{$\clubsuit$}}{\end{preexm}}
\newtheorem{preappl}{{\bf Application}}
\newtheorem{prelem}{{\bf Lemma}}
\newenvironment{lem}{\begin{prelem}{\hspace{-0.5
               em}{\bf.\ }}}{\end{prelem}}
\newtheorem{preproof}{{\bf Proof.\ }}
\newenvironment{proof}[1]{\begin{preproof}{\rm
               #1}\hfill{$\blacksquare$}}{\end{preproof}}
\newtheorem{presproof}{{\bf Sketch of Proof.\ }}
\newtheorem{prethm}{{\bf Theorem}}
\newenvironment{thm}{\begin{prethm}{\hspace{-0.5
               em}{\bf.\ }}}{\end{prethm}}
\newtheorem{prealphthm}{{\bf Theorem}}
\newenvironment{alphthm}{\begin{prealphthm}{\hspace{-0.5
               em}{\bf.\ }}}{\end{prealphthm}}
\newtheorem{prepro}{{\bf Proposition}}
\newenvironment{pro}{\begin{prepro}{\hspace{-0.5
               em}{\bf.\ }}}{\end{prepro}}
\newtheorem{preprb}{{\bf Problem}}
\newenvironment{prb}{\begin{preprb}{\hspace{-0.5
               em}{\bf.\ }}}{\end{preprb}}
\def\conct[#1,#2]{\mbox {${#1} \leftrightarrow {#2}$}}
\def\dconct[#1,#2]{\mbox {${#1} \rightarrow {#2}$}}
\def\deg[#1,#2]{\mbox {$d_{_{#1}}(#2)$}}
\def\mindeg[#1]{\mbox {$\delta_{_{#1}}$}}
\def\maxdeg[#1]{\mbox {$\Delta_{_{#1}}$}}
\def\outdeg[#1,#2]{\mbox {$d_{_{#1}}^{^+}(#2)$}}
\def\minoutdeg[#1]{\mbox {$\delta_{_{#1}}^{^+}$}}
\def\maxoutdeg[#1]{\mbox {$\Delta_{_{#1}}^{^+}$}}
\def\indeg[#1,#2]{\mbox {$d_{_{#1}}^{^-}(#2)$}}
\def\minindeg[#1]{\mbox {$\delta_{_{#1}}^{^-}$}}
\def\maxindeg[#1]{\mbox {$\Delta_{_{#1}}^{^-}$}}
\def\isdef{\mbox {$\ \stackrel{\rm def}{=} \ $}}
\def\dre[#1,#2,#3]{\mbox {${\cal E}_{_{#3}}(#1,#2)$}}
\def\pdre[#1,#2,#3]{\mbox {${\cal P}_{_{#3}}(#1,#2)$}}
\def\var[#1,#2]{\mbox {${\rm Var}_{_{#1}}(#2)$}}
\def\ls[#1]{\mbox {$\xi^{^{#1}}$}}
\def\hom[#1,#2]{\mbox {${\rm Hom}({#1},{#2})$}}
\def\onvhom[#1,#2]{\mbox {${\rm Hom^{v}}(#1,#2)$}}
\def\onehom[#1,#2]{\mbox {${\rm Hom^{e}}(#1,#2)$}}
\def\core[#1]{\mbox {$#1^{^{\bullet}}$}}
\def\cay[#1,#2]{\mbox {${\rm Cay}({#1},{#2})$}}
\def\cays[#1,#2]{\mbox {${\rm Cay_{s}}({#1},{#2})$}}
\def\dirc[#1]{\mbox {$\stackrel{\rightarrow}{C}_{_{#1}}$}}
\def\cycl[#1]{\mbox {${\bf Z}_{_{#1}}$}}
\def\dirp {\mbox {$\stackrel{\rightarrow}{\partial}$}}
\def\rdirp {\mbox {$\stackrel{\leftarrow}{\partial}$}}
\def\simp {\mbox {$\overline{\partial}$}}
\def\dirg {\mbox {$\stackrel{\rightarrow}{\nabla}$}}
\def\dirL {\mbox {$\stackrel{\rightarrow}{\Delta}$}}
\def\simg {\mbox {${\nabla}$}}
\def\siml {\mbox {$\overline{\Delta}$}}
\def\ssimg {\mbox {$\overline{\nabla}$}}
\def\distf {\mbox {$ {\cal O} $}}
\def\odirb {\mbox {$\stackrel{\rightarrow}{E}$}}
\def\idirb {\mbox {$\stackrel{\leftarrow}{E}$}}
\def\sb {\mbox {$\stackrel{\leftrightarrow}{E}$}}
\def\sdg[#1]{\mbox {$\stackrel{\leftrightarrow}{#1}$}}
\def\tr{\mathop{\rm tr}}
\begin{document}
{\footnotesize JCTB {\bf 100} (2010) 390--412}\\
\begin{center}
{\Large \bf  On The Isoperimetric Spectrum of Graphs \\ and Its Approximations}\\
\vspace*{0.5cm}
{\bf Amir Daneshgar\footnote{Correspondence should be addressed to {\tt
daneshgar@sharif.ir}.}
}\\
{\it Department of Mathematical Sciences} \\
{\it Sharif University of Technology} \\
{\it P.O. Box {\rm 11155--9415}, Tehran, Iran}\\
{\tt daneshgar@sharif.ir}\\ \ \\
{\bf Hossein Hajiabolhassan}\\
{\it Department of Mathematical Sciences}\\
{\it Shahid Beheshti University, G.C.}\\
{\it P.O. Box {\rm 19839-63113}, Tehran, Iran}\\
{\tt hhaji@sbu.ac.ir}\\ \ \\
{\bf Ramin Javadi}\\
{\it Department of Mathematical Sciences} \\
{\it Sharif University of Technology} \\
{\it P.O. Box {\rm 11155--9415}, Tehran, Iran}\\
{\tt rjavadi@mehr.sharif.ir}\\ \ \\
\end{center}
\begin{abstract}
\noindent In this paper\footnote{This article is a revised version of \cite{DAHAARXIV} distributed on arXiv.org (1'st, Jan. 2008).}
we consider higher isoperimetric numbers of a (finite directed) graph.
In this regard we focus on the $n$th mean isoperimetric
constant of a directed graph as the minimum of the mean outgoing
normalized flows from a given set of $n$ disjoint subsets of the
vertex set of the graph.
We show that the second mean
isoperimetric constant in this general setting, coincides with (the
mean version of) the classical Cheeger constant of the graph, while
for the rest of the spectrum we show that there is a fundamental
difference between the $n$th isoperimetric constant and the number
obtained by taking the minimum over all $n$-partitions. In this
direction, we show that our definition is the correct one in the sense that it satisfies a
Federer-Fleming-type theorem, and we also  define and present examples for the concept of a {\it supergeometric graph} as a graph
whose mean isoperimetric constants are attained on partitions at all levels.\\
Moreover, considering the ${\bf NP}$-completeness of the isoperimetric problem on graphs,
we address ourselves to the approximation problem where we prove general spectral inequalities
that give rise to a general Cheeger-type inequality as well. On the other hand, we also consider some algorithmic aspects
of the problem where we show connections to orthogonal representations of graphs
and following J.~Malik and J.~Shi ($2000$) we study the close relationships to the well-known
$k$-means algorithm and normalized cuts method.\\
\begin{itemize}
\item[]{{\footnotesize {\bf Key words:}\  Isoperimetric number, Connectivity, Graph, Markov chain.}}
\item[]{ {\footnotesize {\bf Subject classification:}\ 05C40, 60J10.}}
\end{itemize}
\end{abstract}
\section{Introduction}
\subsection{Objectives and main results}
The chapter on isoperimetric numbers and Cheeger-type inequalities
is a classic in geometric analysis as well as spectral graph theory and has been considered from many
different aspects and points of view
\cite{BAN80,BUS92,CHA84,DIA88,GY03,LUB94,MT06}. The study of such
concepts in the discrete case, although more recent, has also been a
center of attention mainly because of its many diverse connections
to important problems of the century, both
applied and theoretical in nature (e.g. see \cite{ALFI96,BHT00,CH97,COL98,CV1,CV2,DIA03,GOD,HAR04,HT04,Le91,MOH89,SAL?,SAL97,SIMA00}).\\
Let us recall (e.g. see \cite{SAL97}) the definition of the
classical Cheeger constant of a Markov chain\footnote{By abuse
of language we only refer to the kernel instead of the stochastic
process itself.} $K$ on a directed base-graph
$G=(V(G),E(G))$, with a nowherezero stationary distribution $\pi$, as
$$\varsigma(K,\pi) \isdef \displaystyle{\min_{_{\pi(Q) \leq 1/2}}}\ \frac{\partial(Q)}{\pi(Q)}=\displaystyle{\min_{_{Q \subseteq V(G)}}}\ \max \left \{\frac{\partial(Q)}{\pi(Q)},\frac{\partial(Q)}{\pi(Q^c)} \right \},$$
where
$$\partial(Q) \isdef \displaystyle{\sum_{_{u \in Q\ \& \ v \not \in Q}}} K(u,v)\pi(u),$$
and the not so common mean version as follows
\begin{equation}
\iota(K,\pi) \isdef \displaystyle{\min_{_{Q \subseteq V(G)}}}\
\frac{\partial(Q)}{2\pi(Q)(1-\pi(Q))}.
\end{equation}
Our main objective in this paper is to analyze the {\it connectivity} of a (directed) finite graph
through its isoperimetric spectrum and to consider the computational aspects of this problem.
In this regard, we will show that the natural generalization of the classical definition to the $n$th
level for $n > 2$ does not work if one takes the minimum over $n$-partitions of the vertex set and we propose a
correct definition in the sense that it satisfies a coarea formula. We also show that this fundamental difference
not only is important in the approximation problem of the isoperimetric spectrum but also can be quite discriminating
in an algorithmic approach to applications.\\
In what follows we try to present a short overview of our approach
in this article.  Firstly, it should be noted that our main strategy
is to transfer the problem to the symmetric base graph and then use
the machinery that is already available through the theory of
reversible Markov chains. Our basic {\it symmetrization} approach is
adopted from some classical methods as presented in \cite{FIL91}.
In this regard, in Section~\ref{PRE}, on the one hand, we have tried to present this well-known setup
in a unified and concise way accessible to graph theorist with an emphasis on concepts related to connectivity and flows on the base graph,
where we have tried to keep our notations classic enough to be most natural for all communities involved.
On the other hand, in this section we show that the standard symmetrization process that translates the connectivity parameters
of a given (in general directed) graph to the connectivity of its symmetrized undirected base graph is natural in the sense that
the stationary distribution and the total flow are preserved in this change of base (see Equation~(\ref{LEMMA2A}) and Lemma~\ref{LEMGER}($a$)).\\
Secondly, it also ought to be noted that computing the isoperimetric
numbers of a graph is known to be a computationally hard problem
(e.g. see \cite{MOH89} for the classic version. For the mean version see \cite{VEMPALANP} or \cite{SIMA00} where an ${\bf NP}$-completeness result
attributed to Papadimitrou~$(1997)$ is presented), when the eigenvalues
and eigenfunctions of a finite graph are quite close at hand and can
be computed effectively (essentially in polynomial time) through
well-known methods of linear algebra.\\
In the sequel, we concentrate on the  {\it mean}
version of the isoperimetric number, and introduce its extensions to
higher indices, as a set of constants called the (mean) {\it isoperimetric
spectrum}, in juxtaposition to the classical spectrum consisting of
the Laplacian eigenvalues. \\
It is instructive to note that although the idea of using the maximum version of the higher isoperimetric numbers as
$$\varsigma_{_{n}}(G,K) \isdef
\displaystyle{\min_{_{ \{Q_{_{i}}\}^{^{n}}_{_{1}}  }} } \ \displaystyle{\max_{1 \leq i \leq n}} \left (
 \frac{\partial (Q_{_{i}})}{\pi(Q_{_{i}})} \right ),$$
can be traced back into some texts as \cite{BUS92}, it seems that the subtle problem of choosing the suitable class of  subsets $\{Q_{_{i}}\}^{^{n}}_{_{1}} $ has not been discussed in detail.  \\
The main reason for our shift of interest
toward the mean version are manifold. On the one hand, we must note that most of our results are correct for both maximum and mean versions,
however usually the proofs for the mean version are more involved. On the other hand,  in our opinion, the {\it
mean spectrum} of the Laplacian whose $n$th element is the
(arithmetic) mean of the first $n$ eigenvalues of the Laplacian
operator, seems to be much more well-behaved than the classical
spectrum because of the smoothing property of the mean operator
(e.g. see \cite{HY95} for the spectral approximations and a perturbation analysis). This may, in a
way, present a fair chance of a better study/approximation of the
spectrum and, in this regard, the generalized mean version of the
isoperimetric constant plays the
central role as the most natural $L^{1}$ counterpart.\\
Another important aspect of considering the mean version is the fact
that it can be traced back into some important applications as clustering (e.g. see \cite{VEMPALANP,SIMA00} among many other references and Section~\ref{KMEANS})
and as far as we could verify, it presents the most
natural applied framework to generalize the isoperimetric constant. In this setup, what
is in our opinion a bit of a {\it surprise}, is that the new
definition seems to be well defined (e.g. in the sense that it
satisfies a generalized co-area formula) only when it is defined as
the minimum over {\it disjoint} subsets of the space (which does not
necessarily constitute a {\it partition}). The difference between
the two definitions based on taking the minimum over {\it
partitions} or {\it disjoint} sets, although disguised in the case
of the classical Cheeger constant (i.e. when we deal with
$2$-partitions; also, see Proposition~\ref{INEQCOR}), seems to be inherently nontrivial in general for both of mean and maximum versions.
 In Section~\ref{ISOSPEC} we introduce and
investigate some basic properties of the generalized mean isoperimetric numbers,
where in Section~\ref{EXAMPLES}, we concentrate on some examples and special cases (specifically, in relation to Theorem~\ref{IOTAEQGAMMA} and the proceeding paragraph).
In Section~\ref{GEOGRAPH} we define the concept of a {\it supergeometric} graph
as a graph for which all parameters involved are equivalent, and we provide some examples of such graphs.
 We believe that supergeometric graphs possess interesting properties that ought to be investigated in future research. \\
 Naturally, pursuing this line of thought, we
analyze the mean isoperimetric spectrum, both from analytic and graph
theoretic points of view, and we prove a Federer-Fleming-type
theorem (Section~\ref{COAREA}) as well as  Cheeger-type inequalities connecting these
parameters and the classical spectrum of eigenvalues in different
levels (Section~\ref{SECGENCHEEGER}). Also, as a byproduct, it is shown that generalized Cheeger
inequalities at the $n$th level seem to be strongly related to the
concept of a {\it nodal domain} (e.g. see \cite{JNT01,ZHU} for the background).\\
Based on the fact that the isoperimetric problem is computationally
a hard  problem,
 in Section~\ref{KMEANS} we concentrate on some algorithmic
considerations related to computation of the isoperimetric spectrum where we study
the close relationships to the well-known $k$-means algorithm.
\section{Preliminaries}\label{PRE}
 In this section we go through some basic definitions and facts
that will be used later. In what follows ${\mathbb R}$ and ${\mathbb R}^+$
are the sets of real  and nonnegative real numbers, respectively,
and for any real number $x \in {\mathbb R}$ we define
$$(x)^+ \isdef \left \{ \begin{array}{ll}
 x & x > 0 \\
 0 & x \leq 0. \end{array}\right. $$
For an $n$-list of real numbers (repetition is allowed) as
$(\zeta_{_{1}},\zeta_{_{2}},\ldots ,\zeta_{_{n}})$, the mean
$n$-list is denoted by
$(\overline{\zeta}_{_{1}},\overline{\zeta}_{_{2}},\ldots
,\overline{\zeta}_{_{n}})$, where
$$\overline{\zeta}_{_{k}} \isdef \frac{1}{k}\ \displaystyle{\sum^{k}_{i=1}}\ \zeta_{_{i}} .$$
Hereafter, we adopt the notation ${\cal I}^{^k}_{_{n}} \isdef
\{k,k+1,\ldots,n\}$. Also, ${\cal I}_{_{n}} \isdef {\cal
I}^{^1}_{_{n}}$.
\subsection{Function spaces}
If $X$ is a set then ${\cal F}^d(X)$ stands for the set of all real
functions $f : X \longrightarrow {\mathbb R}^d$, and also we define ${\cal F}(X) \isdef {\cal F}^1(X)$.
Similarly, ${\cal F}^+(X) \isdef \{f \ | \ f : X \longrightarrow {\mathbb R}^+\}$. Also, for a positive and nowherezero
weight function $\omega : X \longrightarrow {\mathbb R}^+-\{0\}$ we define the inner product $\langle.,.\rangle_{_{\omega}}$ and the norm
$\| . \|_{_{p,\omega}}$ on ${\cal F}(X)$ as
$$\langle f,g\rangle_{_{\omega}} \isdef \displaystyle{\sum_{x \in X}} f(x)g(x)\omega(x), \quad
\| f \|_{_{p,\omega}} \isdef \left ( \displaystyle{\sum_{x \in X}} \
|f(x)|^p \ \omega(x) \right )^{\frac{1}{p}},$$ respectively, where
we usually use the subscript $\omega$ to refer to the product
structure (e.g. ${\cal F}_{_{\omega}}(X)$). Two functions $f,g \in
{\cal F}_{_{\omega}}(X)$ are said to be orthogonal with respect to
$\omega$,
i.e. $f \perp_{_{\omega}} g$, whenever $\langle f,g\rangle_{_{\omega}}=0$.\\
 For any $f \in {\cal F}(X)$,  ${\rm supp}(f)$
stands for the set $\{v \in V(G) \ \ | \ \ f(v) \not = 0 \}$.
Also, for any subset $A \subseteq X$ the restriction of $f$ to
$A$ is denoted by $f|_{_{A}}$, i.e.,
$$f|_{_A}(x) \isdef \left \{ \begin{array}{ll}
f(x) & x \in A  \\
 0 & x \not \in A. \end{array}\right. $$
 The characteristic function of a subset $A \subseteq X$ is denoted by $\chi_{_{A}} \isdef 1|_{_{A}}$
 when $X$ is clear from the context.\\
Moreover, for any real function $f$, the functions $f^+ $ and $f^- $
stand for the positive and negative parts of $f$, respectively; and
consequently,
$$f=f^+ - f^- \quad {\rm and} \quad |f|=f^+ + f^-.$$
 For any two functions (or vectors) $f,g$, we write $f \leq g$ if
$$\forall\ v \in V(G)  \quad f(v) \leq g(v).$$
Also, we write $f < g$ if $f \leq g$ and $f \not = g$.
\subsection{Graphs and kernels}
The main objective of this section is to introduce a common language
of graphs and kernels that is accessible to both graph theorists and
experts in functional analysis and also benefits from all aspects of
the two points of view.\\
 Throughout the paper, a {\it graph} $G=(V(G),E(G))$ is
always assumed to be a finite directed  graph (possibly with loops
and without multiple edges), where $E(G) \subseteq V(G) \times
V(G)$. Similarly, an {\it undirected graph}
$\overline{G}=(V(\overline{G}),E(\overline{G}))$ is
 a finite set $V(\overline{G})$ along with a set of undirected edges $E(\overline{G})$,
 each element of which is a subset of $V(\overline{G})$ whose size is less than or equal to
 $2$.
When it is clear from the context, by abuse of notation, we use the
same symbol $uv$ both for the directed edge $(u,v) \in V(G) \times
V(G)$ of a directed graph and also for a simple edge $\{u,v\}$ of an
undirected  graph.\\
For a given graph $G$, we use the natural notation, $\sdg[G]$, for
its symmetric directed base graph i.e. $V(\sdg[G]) \isdef V(G)$ and
$$ (uv \in E(\sdg[G]) \ {\rm and} \ vu \in E(\sdg[G])) \Leftrightarrow (uv \in E(G) \ {\rm or}\ vu \in
E(G)).$$
 Moreover, for a given graph $G$, $\overline{G}$ stands for its symmetric undirected base
graph i.e. $V(\overline{G}) \isdef V(G)$ and
$$ uv \in E(\overline{G}) \Leftrightarrow (uv \in E(G) \ {\rm or}\ vu \in
E(G)).$$ Note that for an undirected graph
$\overline{G}=(V(\overline{G}),E(\overline{G}))$ we may think of
any simple edge $uv$ as a subset $\{u,v\} \subseteq
V(\overline{G})$. With this interpretation
$\sdg[G]=(V(\sdg[G]),E(\sdg[G]))$ is a directed graph obtained
by replacing any simple edge $uv \in E(\overline{G})$ with two
directed edges $uv \in E(\sdg[G])$ and $vu \in E(\sdg[G])$. Note
that there is a one-to-one correspondence between undirected
graphs and symmetric directed graphs, where the undirected
presentation can be interpreted as a more compact version
of expressing the same data. \\
Given any $n \times n$ matrix  $K$ whose rows and columns are
indexed by the elements of an $n$-set $V$, in general, one can
construct a graph $G_{_{K}}=(V,E)$ where
$$uv \in E \quad \Leftrightarrow \quad K(u,v) \not = 0.$$
Then, it is clear that from this point of view, the concept of a
{\it weighted graph} contains the same data as the concept of a
matrix, and moreover, symmetric graphs as well as undirected graphs
correspond to the concept of  symmetric matrices.\\ \ \\
{\bf Notational assumption:} Throughout the paper an {\it overlined} notation
is usually adopted to refer to a symmetrization process or taking arithmetic means
applied to the original concept.\\  \ \\
For two given subsets $X,Y$ of
$V(G)$ we define
$$\stackrel{\rightarrow}{E}(X,Y) \isdef \{uv \in E(G) \ \  | \ \ u \in X \ \& \ v \in Y\}.$$
 Also, for a subset $Q \subset V(G)$ we define
$$\odirb(Q) \isdef \stackrel{\rightarrow}{E}(Q,Q^c), \quad  \idirb(Q) \isdef \stackrel{\rightarrow}{E}(Q^c,Q),$$
and
$$\sb(Q) \isdef \odirb(Q) \cup \idirb(Q).$$
Hereafter, ${\sf K}_{_{t}}$ and ${\sf K}_{_{r,s}}$ stand for the (simple) complete  graph on $t$ vertices and the complete bipartite graph
on two parts of sizes $r$ and $s$, respectively.
\subsection{Markov kernels and the energy space}
Our major objective in this section is to present a standard symmetrization process
as well as some basic facts about the theory of
finite Markov chains with a graph theoretic emphasis (e.g. for more
on this see \cite{ALFI96,JGT,FIL91,SAL97}).
Particularly, we will show that the basic parameters used in this article as the weight functions i.e. the stationary distribution and the
corresponding natural flow, are preserved in this setup, and consequently, one may talk about the connectivity parameters that are computable
from the symmetric model of the corresponding undirected base graph. In this direction, we have tried to use the {\it overlined} notations
for the parameters  and concepts related to the undirected base graphs where we have used the {\it arrowed} notations for the general directed case.
We will elaborate on the details of this notational assumptions later in this section.\\
Hereafter, given a graph $G$,
we assume that $K$ is the kernel of a Markov chain on this graph and
$\pi$ is a nowherezero stationary distribution, i.e. $\pi K =\pi$
and $\pi(v)
\not = 0$ for all $v \in V(G)$.\\
In this setting, $\phi(u,v) \isdef K(u,v) \pi(u)$ defines a
nowherezero flow on $G$. Also, for any two disjoint sets  $X,Y
\subseteq V(G)$ we define
$$\pi(X) \isdef \displaystyle{  \sum_{_{u \in X}}  }\ \pi(u),
\quad \dirp_{_{\phi}}(X,Y) \isdef \sum_{_{uv \in
\stackrel{\rightarrow}{E}(X,Y)}} \phi(u,v),\quad
\dirp_{_{\phi}}(X) \isdef \dirp_{_{\phi}}(X,X^c),$$ and
$\rdirp_{_{\phi}}(X,Y)$ and $\rdirp_{_{\phi}}(X)$ analogously.
 Note that since $\phi$ is a flow, for every nonvoid subset $Q
\subseteq V(G)$, we have
$$\dirp_{_{\phi}}(Q)=\rdirp_{_{\phi}}(Q)=\dirp_{_{\phi}}(Q^c).$$
By abuse of notation, we may write $\dirp(Q)$ for
simplicity, if the flow is clear from the context.
 Within the same
\footnote{Note that for Riemannian manifolds $\partial
(Q)=Vol(Boundary(Q))$ has the same property as a trivial flow with
only a nonzero component to the complement.}
setup, for the symmetric graph $\overline{G}$, we consider the
kernel
$$\overline{K}(u,v) \isdef \frac{1}{2} (K+K^*)
=\frac{1}{2} \left ( K(u,v)+ \frac{K(v,u)\pi(v)}{\pi(u)}  \right),$$
with the same stationary distribution $\pi$ inducing the flow
$$\overline{\phi}(u,v) \isdef  \overline{K}(u,v) \pi(u)= \frac{1}{2} (\phi(u,v)+\phi(v,u))$$
on $\overline{G}$. We can also define
$\simp_{_{\overline{\phi}}}(Q)$ similarly. Note that
\begin{equation}\label{LEMMA2A}
\simp_{_{\overline{\phi}}}(Q) \isdef \sum_{\genfrac{}{}{0pt}{}{uv\in
E(\overline{G})}{u\in Q, v\in Q^c}} \overline{\phi}(u,v) =\frac{1}{2}(\dirp_{_{\phi}}(Q)+\rdirp_{_{\phi}}(Q))=\dirp_{_{\phi}}(Q).
\end{equation}
This shows that for a given graph $G$, the outgoing flow from a subset $Q \subset V(G)$ is
equal to the outgoing flow from $Q \subset V(\overline{G})$ in the symmetrized model, which
justifies our transformation method from the directed
case to the symmetric case, when dealing with connectivity parameters of graphs
in terms of the corresponding flows. We will also prove a generalization of this fact in
Lemma~\ref{LEMGER}(a).\\
 Now, we consider two linear Laplacian
operators on ${\cal F}_{_{\pi}}(G)$ as follows
$$\dirL \isdef id-K \quad {\rm and} \quad \siml \isdef id-\overline{K},$$
where $id$ is the identity operator. It is clear that $\overline{K}$
and $\siml$ are self-adjoint operators on ${\cal F}_{_{\pi}}(G)$ by
definition, while $K$ and $\dirL$ may not be necessarily so, unless
$K=K^*$ and $\dirL=\siml$. Hence, when $|V(G)|=n$, one may order
all real eigenvalues of $\siml$ as
\begin{equation}\label{LAMBDASEQ}
0=\lambda_{_{1}} \leq \lambda_{_{2}} \leq \cdots \leq
\lambda_{_{n}}.
\end{equation}
(At times we may use superscripts as
$\lambda^{^{G}}_{_{2}}$ or $\lambda^{^{K}}_{_{2}}$ to refer to the
graph or the kernel when details are
clear from the context.)\\
Also, it is a well-known fact (Perron-Frobenius theorem) that for a
strongly connected graph $G$, the eigenspace corresponding to the
eigenvalue $0=\lambda^{^G}_{_{1}}$ is one-dimensional and is
generated by the constant vector ${\bf 1}$. Moreover, for any
$n \times n$ self-adjoint matrix $A$, and for any $1 \leq k \leq n$, by Courant--Fischer variational principle
(see \cite{SAL97}), one may write
\begin{equation}
\lambda^{^A}_{_{k}}=\displaystyle{\min_{_{W \in {\cal W}_{_{k}}}}}
\displaystyle{\max_{_{\ 0 \not = f \in W}}} \left \{\frac{ \langle Af,f\rangle }{
\|f\|^{^2}} \right \}= \displaystyle{\max_{_{\ W \in {\cal
W}^{\bot}_{_{k-1}}}}} \displaystyle{\min_{_{\ 0 \not = f \in W}}}
\left \{\frac{\langle Af,f\rangle}{\|f\|^{^2}} \right \},
\end{equation}
in which
$${\cal W}_{_{k}} \isdef
\{W  \ \ | \ \ {\rm dim}(W) \geq k \}, \quad {\cal
W}^{\bot}_{_{k}} \isdef \{W  \ \ | \ \ {\rm dim}(W^{\bot}) \leq k
\},$$ and $\langle f,g\rangle$ is the inner product of the space on which $A$ is defined and is self-adjoint.
\subsection{Gradients, energy and their properties}
Given a graph $G$, one may define the {\it directed}, {\it
classical}, and {\it symmetric} gradients, respectively, as follows
\begin{itemize}
\item{$\dirg : {\cal F}_{_{\pi}}(G) \longrightarrow {\cal F}_{_{\phi}}(G)$ as $\dirg f (uv) \isdef (f(u)-f(v))^+$.}
\item{$\simg : {\cal F}_{_{\pi}}(G) \longrightarrow {\cal F}_{_{\phi}}(G)$ as $\simg f (uv) \isdef f(u)-f(v)$.}
\item{$\ssimg : {\cal F}_{_{\pi}}(\overline{G}) \longrightarrow {\cal F}_{_{\overline{\phi}}}(\overline{G})$ as $\ssimg f (uv) \isdef |f(u)-f(v)|$.}
\end{itemize}
In the rest of this paper we will adopt the following
framework. \\ \ \\
{\bf Assumption:}
Following our previous notational assumption, hereafter, given any directed graph $G$  (possibly with loops),
we will be working with the measure spaces $(V(G),\pi)$ and
$(E(G),\phi)$  as well as $(V(\overline{G}),\pi)$ and
$(E(\overline{G}),\overline{\phi})$ for the corresponding
undirected graph, (note that the last case also covers the case
of simple graphs). In our notations, the subscript determines the
function space under consideration (e.g. ${\cal F}_{_{\phi}}(G)$
stands for the set of all real functions defined on $E(G)$, the
set of edges of a given graph $G$, equipped with an inner product
weighted by $\phi$). Thus we will be working within the frameworks
$[G,(V(G),\pi),(E(G),\phi),\dirg,\simg,\dirL]$  and
$[\overline{G},(V(\overline{G}),\pi),(E(\overline{G}),\overline{\phi}),\ssimg,\siml]$
for the (in general directed) graph $G$, and the corresponding undirected graph, respectively. \\  \ \\
 It ought to be noted that considering the inner-product space
equipped with the weighted inner-product $\langle.,.\rangle_{_{\pi}}$ has the
advantage of reflecting parts of the global structural properties
of the base graph in the spectrum of the corresponding Laplacian
operator (e.g. see \cite{ALGCIRC,NODAL,JGT,FIL91}), while this is not necessarily true when one uses the
ordinary inner-product of ${\mathbb R}^n$ or symmetrization by the square root of the degree matrix (e.g. as in \cite{CH97,CH05,CGY00}).
Let us start with the following well-known result.
\begin{lem}\label{LAPLACADJ}
For a given graph $G$, the classical gradient, $\simg$, is a linear
operator and has an adjoint $\simg^* : {\cal F}_{_{\phi}}(G)
\longrightarrow {\cal F}_{_{\pi}}(G) $ defined as
$$\simg^* f (u)  \isdef \frac{1}{\pi(u)} \left ( \displaystyle{\sum_{_{uv\in E(G)}}} f(uv) \phi(u,v)
    - \displaystyle{\sum_{_{vu\in E(G)}}}  f(vu) \phi(v,u)  \right ).$$
Moreover, $2\siml= \simg^* \simg$.
\end{lem}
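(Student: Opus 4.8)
The plan is to check all three assertions in sequence straight from the definitions, since none of them requires more than careful bookkeeping. Linearity of $\simg$ is immediate: $\simg f(uv)=f(u)-f(v)$ depends linearly on the vector $(f(w))_{w\in V(G)}$. Since ${\cal F}_{_{\pi}}(G)$ and ${\cal F}_{_{\phi}}(G)$ are finite-dimensional inner product spaces (here $\pi$ is nowherezero, so $<.,.>_{_{\pi}}$ really is an inner product), $\simg$ automatically has a unique adjoint, and it suffices to verify that the displayed formula realizes it, i.e. that $<\simg f,g>_{_{\phi}}=<f,\simg^* g>_{_{\pi}}$ for all $f\in{\cal F}_{_{\pi}}(G)$ and $g\in{\cal F}_{_{\phi}}(G)$.

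For the adjoint identity I would expand
$$<\simg f,g>_{_{\phi}}=\sum_{uv\in E(G)}(f(u)-f(v))\,g(uv)\,\phi(u,v),$$
split it as $A-B$ with $A=\sum_{uv\in E(G)}f(u)g(uv)\phi(u,v)$ and $B=\sum_{uv\in E(G)}f(v)g(uv)\phi(u,v)$, and reindex each of the two sums by the vertex whose $f$-value survives: grouping the terms of $A$ by their tail and the terms of $B$ by their head yields
$$A-B=\sum_{w\in V(G)}f(w)\Big(\sum_{wv}g(wv)\phi(w,v)-\sum_{vw}g(vw)\phi(v,w)\Big).$$
The bracket is exactly $\pi(w)\,\simg^* g(w)$ by the definition of $\simg^*$, so the whole expression equals $\sum_{w}f(w)\,\simg^* g(w)\,\pi(w)=<f,\simg^* g>_{_{\pi}}$, which is what we want.

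For $2\Delta=\simg^*\simg$ I would substitute $g=\simg f$ into the formula for $\simg^*$ and use $\simg f(uv)=f(u)-f(v)$ together with $\simg f(vu)=-(f(u)-f(v))$, so that the minus sign in the definition of $\simg^*$ becomes a plus and the two inner sums merge:
$$\simg^*\simg f(u)=\frac{1}{\pi(u)}\sum_{v}(f(u)-f(v))\big(\phi(u,v)+\phi(v,u)\big),$$
the sum running over the neighbours $v$ of $u$ in $\overline{G}$ (a loop at $u$ contributes $0$ to both sums, so it causes no trouble). Now $\phi(u,v)+\phi(v,u)=2\overline{\phi}(u,v)=2\overline{K}(u,v)\pi(u)$, the last equality being just the definitions of $\overline{\phi}$ and $\overline{K}$; hence $\simg^*\simg f(u)=2\sum_{v}(f(u)-f(v))\overline{K}(u,v)$. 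Finally, $\pi K=\pi$ forces $\overline{K}=\tfrac{1}{2}(K+K^*)$ to be stochastic, so $\sum_{v}\overline{K}(u,v)=1$ and the right-hand side equals $2\big(f(u)-\overline{K}f(u)\big)=2\Delta f(u)$, as claimed.

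The computation is entirely routine; the one spot calling for a little care is the reindexing step in the adjoint computation, where one must correctly keep track of which endpoint of an edge carries the surviving $f$-value in each of the two sums, and remember that $\phi$ is supported precisely on $E(G)$, so that a loop $uu$ appears in both the outgoing and the incoming sums and must be treated consistently. I do not expect a genuine obstacle here.
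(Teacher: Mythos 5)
Your proof is correct and follows the same route as the paper's (which simply asserts the identities $\Delta f(u)=\tfrac{1}{2}\simg^*\simg f(u)=\tfrac{1}{\pi(u)}\sum_v(f(u)-f(v))\overline{\phi}(u,v)$ and $<\simg^*\simg f,g>_{_{\pi}}=<\simg f,\simg g>_{_{\phi}}$ as "easy to check"); you have merely written out the reindexing and the use of $\phi(u,v)+\phi(v,u)=2\overline{\phi}(u,v)=2\overline{K}(u,v)\pi(u)$ and of the stochasticity of $\overline{K}$ in full. No gap.
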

\begin{proof}{
Verification of the adjunction is straightforward. For the second equality, we have,
$$(\siml f)(u)=\frac{1}{\pi(u)} \displaystyle{\sum_{_{v \in
V(G)}}} (f(u)-f(v))\ \overline{\phi}(u,v)=\frac{1}{\pi(u)}
\displaystyle{\sum_{_{v \in V(G)}}} \simg f(uv)\
\overline{\phi}(u,v)=\frac{1}{2}\ \simg^* \simg f(u).$$ Also, note
that,
$$\langle 2\siml f,g\rangle_{_{\pi}}=\langle \simg^* \simg f,g\rangle_{_{\pi}}=\langle \simg f,\simg g\rangle_{_{\phi}},$$
holds for all $f,g \in {\cal F}_{_{\pi}}(G)$.
 }\end{proof}
The simple but important statement of Lemma~\ref{LAPLACADJ} in a way
presents the symmetrization process of constructing the undirected
symmetric graph $\overline{G}$ from a given graph $G$, in an
analytic sort of way. In other words, starting from a kernel $K$ on
a base graph $G$, and considering the operators $\simg$ and
$\simg^*$, one may construct the symmetric Laplacian operator as
$\siml=\frac{1}{2}\simg^*\simg$ that introduces a new kernel whose base graph
is $\overline{G}$. Also, a classical and interesting fact is that if
one starts from a graph $G$ and considers the conservation of energy
as Kirchhoff's node and loop laws, then one finds a Poisson's
equation relating the current and voltage (i.e. potential) functions
whose basic operator is the symmetric Laplacian $\siml$ on
$\overline{G}$ (e.g. see \cite{SOA94}). Hence, in this
sense, conservation of energy naturally is linked to connectivity through the symmetric model.\\
With this background, one of our main objectives can be described as
finding methods that can reflect some of the {\it connectivity} properties
of $G$ in its related symmetric model $\overline{G}$, through the
self-adjoint or symmetric operators defined on it. Therefore, it is
natural to concentrate on well-behaved or induced operators on
$\overline{G}$ (e.g. $\ssimg : {\cal F}_{_{\pi}}(\overline{G})
\longrightarrow {\cal F}_{_{\overline{\phi}}}(\overline{G})$) and
consider their relationships to those of $G$. The following lemma
summarizes some of the basic properties of these operators for
further reference. Specially, note that Equation~(\ref{LEMMA2A}) is a consequence of part $(a)$.
\begin{lem}\label{LEMGER}
For any given graph $G$ with a kernel $K$ on it, and $f \in {\cal F}_{_{\pi}}(G)$,
\begin{itemize}
\item[a {\rm )}]{$\| \dirg f\|_{_{1,\phi}} = \frac{1}{2}\ \|\simg f\|_{_{1,\phi}}=\| \ssimg f \|_{_{1,\overline{\phi}}}.$}
\item[b {\rm )}]{$\|\dirg f\|_{_{1,\phi}}=\|\dirg f^+\|_{_{1,\phi}}+\|\dirg f^-\|_{_{1,\phi}}.$}
\item[c {\rm )}]{$\frac{1}{2}\ \|\simg f\|^{^2}_{_{2,\phi}}= \|\ssimg f\|^{^2}_{_{2,\overline{\phi}}}=\langle\siml f,f\rangle_{_{\pi}}
                  =\langle\dirL f,f\rangle_{_{\pi}}.$}
\end{itemize}
\end{lem}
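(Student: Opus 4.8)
The plan is to verify each of the three identities directly from the definitions of the gradients, the norms $\|\cdot\|_{_{p,\omega}}$, and the flows $\phi$, $\overline{\phi}$, exploiting the combinatorial bijection between pairs of opposite directed edges $uv,vu \in E(\sdg[G])$ and the single undirected edge of $E(\overline{G})$.

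For part (a), I would start from the definition $\dirg f(uv) = (f(u)-f(v))^+$ and note that for the opposite edge $\dirg f(vu) = (f(v)-f(u))^+$, so that exactly one of these is nonzero (unless $f(u)=f(v)$), and in either case $\dirg f(uv)\phi(u,v) + \dirg f(vu)\phi(v,u)$ compares with the corresponding contribution of $\simg$. More precisely, $\|\simg f\|_{_{1,\phi}} = \sum_{uv\in E(G)} |f(u)-f(v)|\phi(u,v)$; pairing $uv$ with $vu$ in $\sdg[G]$ and using $(x)^+ + (-x)^+ = |x|$ together with $\|\dirg f\|_{_{1,\phi}} = \sum (f(u)-f(v))^+\phi(u,v)$, I get the factor $\tfrac12$ after accounting for the double-counting inherent in summing over both orientations. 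The identification with $\|\ssimg f\|_{_{1,\overline{\phi}}}$ then follows since $\overline{\phi}(u,v) = \tfrac12(\phi(u,v)+\phi(v,u))$ and $\ssimg f(uv) = |f(u)-f(v)|$ is symmetric in $u,v$, so each undirected edge contributes $|f(u)-f(v)|\cdot\tfrac12(\phi(u,v)+\phi(v,u))$, matching half the directed sum.

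For part (b), the key observation is the ``no-cancellation'' phenomenon: on any directed edge $uv$, the quantity $(f(u)-f(v))^+$ is nonzero only if $f(u) > f(v)$, in which case at least one of $f(u),f(v)$ is handled cleanly by $f^+$ or $f^-$. I would argue pointwise on edges: split into cases according to the signs of $f(u)$ and $f(v)$ and check that $(f(u)-f(v))^+ = (f^+(u)-f^+(v))^+ + (f^-(u)-f^-(v))^+$ holds in each case (when $f(u)\ge 0\ge f(v)$ the two terms on the right are $f^+(u)$ and $f^-(v)$ which sum to $f(u)-f(v)$; the other cases are immediate since one of the parts vanishes on both endpoints). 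Summing against $\phi$ over all edges then gives the claim, since $\dirg f^+(uv) = (f^+(u)-f^+(v))^+$ and likewise for $f^-$.

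For part (c), I would expand $\|\simg f\|^{^2}_{_{2,\phi}} = \sum_{uv\in E(G)}(f(u)-f(v))^2\phi(u,v)$ and, pairing opposite edges of $\sdg[G]$, collapse it to $\sum_{uv\in E(\overline{G})}(f(u)-f(v))^2(\phi(u,v)+\phi(v,u)) = 2\|\ssimg f\|^{^2}_{_{2,\overline{\phi}}}$ since $(f(u)-f(v))^2 = |f(u)-f(v)|^2$ is orientation-independent; that gives the first equality. For $\|\ssimg f\|^{^2}_{_{2,\overline{\phi}}} = \langle\Delta f,f\rangle_{_{\pi}}$ I would invoke Lemma~\ref{LAPLACADJ}, which already records $\langle 2\Delta f,f\rangle_{_{\pi}} = \langle\simg f,\simg f\rangle_{_{\pi}} = \|\simg f\|^{^2}_{_{2,\phi}}$; combined with the first equality this yields $\langle\Delta f,f\rangle_{_{\pi}} = \|\ssimg f\|^{^2}_{_{2,\overline{\phi}}}$. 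Finally, for $\langle\Delta f,f\rangle_{_{\pi}} = \langle\dirL f,f\rangle_{_{\pi}}$, I note $\Delta - \dirL = K - \overline{K} = \tfrac12(K - K^*)$ is skew-adjoint with respect to $\langle\cdot,\cdot\rangle_{_{\pi}}$, so $\langle(K-K^*)f,f\rangle_{_{\pi}} = 0$ for every real $f$, hence the two quadratic forms agree. I do not anticipate a serious obstacle here; the only mildly delicate point is bookkeeping the factor of $2$ consistently between ``sum over directed edges of $G$'' and ``sum over undirected edges of $\overline{G}$'', which is exactly why the $\tfrac12$'s and the symmetrization $\overline{\phi}$ appear where they do, so I would be careful to fix one convention (summing over $E(G)$, then reorganizing via $\sdg[G]$) and stick to it throughout all three parts.
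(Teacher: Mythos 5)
Your part (c) is correct and matches the paper's route: the first equality is a purely local pairing since $(f(u)-f(v))^2$ is orientation-independent, the second is Lemma~\ref{LAPLACADJ}, and the last is the skew-adjointness of $\frac12(K-K^*)$. But parts (a) and (b) both contain genuine gaps, and the same missing ingredient underlies both: the fact that $\phi$ is a \emph{flow}, i.e. $\sum_{v}\phi(u,v)=\sum_{v}\phi(v,u)=\pi(u)$ for every $u$, hence $\sum_{u,v}(f(u)-f(v))\,\phi(u,v)=0$. In part (a) the second equality $\frac12\|\simg f\|_{_{1,\phi}}=\|\ssimg f\|_{_{1,\overline{\phi}}}$ is indeed the local pairing you describe, but the first one, $\|\dirg f\|_{_{1,\phi}}=\frac12\|\simg f\|_{_{1,\phi}}$, is \emph{not} local: when $f(u)>f(v)$ the pair $\{uv,vu\}$ contributes $|f(u)-f(v)|\,\phi(u,v)$ to the left side but $\frac12|f(u)-f(v)|\,(\phi(u,v)+\phi(v,u))$ to the right side, and these differ whenever $\phi(u,v)\neq\phi(v,u)$, which is the generic situation for a directed kernel. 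The identity only holds after summing over all edges, via $(x)^{+}=\frac12(|x|+x)$ together with $\sum_{u,v}(f(u)-f(v))\phi(u,v)=0$; this global cancellation is exactly what the paper's phrase ``since $\phi$ is a flow'' is invoking, and your argument never uses it.

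Part (b) is worse: the pointwise identity you assert, $(f(u)-f(v))^{+}=(f^{+}(u)-f^{+}(v))^{+}+(f^{-}(u)-f^{-}(v))^{+}$, is false. For $f(u)=3$, $f(v)=-2$ the left side is $5$ while the right side is $3+(0-2)^{+}=3$: you computed $(f^{-}(u)-f^{-}(v))^{+}$ as $f^{-}(v)$, but $f^{-}(u)=0\le f^{-}(v)$, so that term vanishes. Likewise for $f(u)=-1$, $f(v)=-2$ the left side is $1$ and the right side is $0$, so the ``immediate'' case where $f^{+}$ vanishes on both endpoints also fails. The discrepancy is recovered only on the \emph{reversed} edge, so it does not cancel edge-by-edge against an asymmetric $\phi$. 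The clean repair is to route (b) through (a): the identity $|f(u)-f(v)|=|f^{+}(u)-f^{+}(v)|+|f^{-}(u)-f^{-}(v)|$ \emph{is} true pointwise (check the three sign configurations), hence $\|\ssimg f\|_{_{1,\overline{\phi}}}=\|\ssimg f^{+}\|_{_{1,\overline{\phi}}}+\|\ssimg f^{-}\|_{_{1,\overline{\phi}}}$, and applying part (a) to each of $f$, $f^{+}$, $f^{-}$ gives the claim.
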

\begin{proof}{For $(a)$ note that $2\dirg f(uv)=\big(f(u)-f(v)\big)+|f(u)-f(v)|$. Since  $\phi$ is a flow on $E(G)$,
we have $$\displaystyle{\sum_{_{uv\in E(G)}}} \big(f(u)-f(v)\big)\ \phi(u,v)=\sum_{u\in V(G)}f(u)\ \big(\dirp(\{u\})-\rdirp(\{u\})\big)=0$$ and
consequently,
\begin{eqnarray*}
\|\dirg f\|_{_{1,\phi}} &=&\displaystyle{\sum_{_{uv \in E(G)}}}
\dirg f(uv)\ \phi(u,v)=\frac{1}{2} \displaystyle{\sum_{_{uv \in
E(G)}}} |f(u)-f(v)|\ \phi(u,v)\\ &=&\frac{1}{2}\sum_{_{uv\in
E(\overline{G})}} \ssimg f(uv)\ \big(\phi(u,v)+\phi(v,u)\big)
=\|\ssimg f\|_{_{1,\overline{\phi}}}.
\end{eqnarray*}
Equality in $(b)$ is clear. Also, $(c)$ follows from
Lemma~\ref{LAPLACADJ} and the following equalities,
\begin{eqnarray*}
\langle(id-K) f,f\rangle_{_{\pi}}&=&\langle(id-\frac{1}{2}(K+K^*)) f,f\rangle_{_{\pi}}=\frac{1}{2}\|\simg f\|^{^2}_{_{2,{\phi}}}\\
& = & \frac{1}{2}\displaystyle{\sum_{_{uv
\in E(\overline{G})}}} |f(u)-f(v)|^2\ \big(\phi(u,v)+\phi(v,u)\big)=
\|\ssimg f\|^{^2}_{_{2,{\overline{\phi}}}}.
\end{eqnarray*}
 }\end{proof}
Clearly, in this approach, one needs some relations between the
energy (Dirichlet) forms and different norms of the operators to
construct the necessary connections needed. The following two
lemmas demonstrate the most basic relationships.
\begin{lem}
For every $f \in {\cal F}_{_{\pi}}(G)$ we have
\begin{itemize}
\item[a {\rm )}]{$ \|\simg f\|_{_{1,\phi}} \leq \| \simg f \|_{_{2,\phi}}.$ }
\item[b {\rm )}]{$\| \dirg f\|_{_{1,\phi}}=\| \ssimg f\|_{_{1,\overline{\phi}}} \leq
\frac{\sqrt{2}}{2}\ \| \ssimg f \|_{_{2,\overline{\phi}}}.$ }
\end{itemize}
\end{lem}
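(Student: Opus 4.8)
The plan is to prove both inequalities as straightforward consequences of the norm comparison on finite measure spaces of total mass $1$, combined with the already-established identities from the previous lemma. The key structural fact is that $\phi$ is a flow coming from a Markov kernel with stationary distribution $\pi$, so $\phi$ has total mass $\|\phi\|_1 = \sum_{uv \in E(G)} \phi(u,v) = \sum_u \pi(u) \sum_v K(u,v) = \sum_u \pi(u) = 1$, and similarly $\overline{\phi}$ on $E(\overline{G})$ has total mass $1$ (it is obtained by symmetrizing $\phi$, which preserves the total sum). This is exactly the setting in which the $L^1$ norm is dominated by the $L^2$ norm.

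\textbf{Proof of (a).} First I would apply the Cauchy--Schwarz inequality (equivalently, Jensen's inequality for the convex function $t \mapsto t^2$ against the probability measure $\phi/\|\phi\|_1 = \phi$) to the function $\ssimg f$ on $(E(G),\phi)$. Writing $g = \simg f$, we have
\[
\|g\|_{_{1,\phi}} = \displaystyle{\sum_{uv \in E(G)}} |g(uv)| \cdot 1 \cdot \phi(u,v) \leq \left( \displaystyle{\sum_{uv \in E(G)}} |g(uv)|^2 \phi(u,v) \right)^{1/2} \left( \displaystyle{\sum_{uv \in E(G)}} \phi(u,v) \right)^{1/2} = \|g\|_{_{2,\phi}},
\]
using $\sum_{uv} \phi(u,v) = 1$. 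This gives part (a).

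\textbf{Proof of (b).} For part (b) I would first invoke part (a) of the preceding lemma, which gives $\| \dirg f\|_{_{1,\phi}} = \| \ssimg f \|_{_{1,\overline{\phi}}}$, so it suffices to bound $\|\ssimg f\|_{_{1,\overline{\phi}}}$ by $\frac{\sqrt{2}}{2}\|\ssimg f\|_{_{2,\overline{\phi}}}$. Applying the same Cauchy--Schwarz argument as above, but now on the measure space $(E(\overline{G}),\overline{\phi})$, yields $\|\ssimg f\|_{_{1,\overline{\phi}}} \leq (\sum_{uv \in E(\overline{G})} \overline{\phi}(u,v))^{1/2} \, \|\ssimg f\|_{_{2,\overline{\phi}}}$. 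The only remaining point is to compute the total mass of $\overline{\phi}$ on $E(\overline{G})$: since each undirected edge $\{u,v\}$ carries weight $\overline{\phi}(u,v) = \tfrac12(\phi(u,v)+\phi(v,u))$ and these correspond bijectively to unordered pairs, summing over $E(\overline{G})$ gives $\tfrac12 \sum_{u,v} \phi(u,v) = \tfrac12$. Hence the constant is $(\tfrac12)^{1/2} = \frac{\sqrt{2}}{2}$, as claimed. (Alternatively, one could derive (b) directly from (a) of this lemma combined with parts (a) and (c) of the previous lemma, but the direct computation is cleaner.)

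\textbf{Main obstacle.} There is no genuine difficulty here; the only thing requiring care is bookkeeping the normalization constants — specifically, keeping straight that $\phi$ has total mass $1$ on $E(G)$ while $\overline{\phi}$ has total mass $\tfrac12$ on $E(\overline{G})$ (because passing to undirected edges halves the count of summands while the symmetrization $\tfrac12(\phi(u,v)+\phi(v,u))$ does not recover the lost factor). Getting this factor right is what produces the $\frac{\sqrt{2}}{2}$ in (b) versus the bare constant $1$ in (a), and is also consistent with part (a) of the earlier lemma, which already records the factor-$2$ discrepancy between $\|\dirg f\|_{1,\phi}$ and $\|\simg f\|_{1,\phi}$.
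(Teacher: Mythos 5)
Your proposal is correct and uses essentially the same argument as the paper: Cauchy--Schwarz against the flow of total mass one, which is exactly the paper's displayed computation for part (a). For part (b) the paper implicitly chains part (a) with the identities $\|\dirg f\|_{_{1,\phi}}=\frac{1}{2}\|\simg f\|_{_{1,\phi}}$ and $\frac12\|\simg f\|^{^2}_{_{2,\phi}}=\|\ssimg f\|^{^2}_{_{2,\overline{\phi}}}$ from the preceding lemma, whereas you run Cauchy--Schwarz directly on $(E(\overline{G}),\overline{\phi})$ with total mass $\tfrac12$ --- an equivalent bookkeeping (and on a graph with loops the diagonal terms vanish in both norms, so the mass bound $\le\tfrac12$ on the relevant edges still gives the constant $\frac{\sqrt{2}}{2}$).
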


\begin{proof}{Since $\phi$ is a flow we have $\displaystyle{\sum_{_{uv \in E(G)}}} \ \phi(u,v)=1$,
and consequently, by Cauchy-Schwarz inequality,
$$
\begin{array}{rl}
\displaystyle{\sum_{_{uv \in E(G)}}} |f(u)-f(v)|^2 \ \phi(u,v) &
=\left ( \displaystyle{\sum_{_{uv \in E(G)}}} |f(u)-f(v)|^2 \
\phi(u,v) \right )
\left (  \displaystyle{\sum_{_{uv \in E(G)}}} \ \phi(u,v) \right )\\
&\\
&\geq \left ( \displaystyle{\sum_{_{uv \in E(G)}}} |f(u)-f(v)| \
\phi(u,v) \right )^2.
\end{array}
$$
Part $(b)$ follows by a similar discussion.
 }\end{proof}

\begin{lem}\label{LEMNORM2}
For every $0 \not = f \in {\cal F}_{_{\pi}}(G)$ we have
\begin{itemize}
\item[a {\rm )}]{$\frac{\|\nabla f^2\|_{_{1,\phi}}}{\|f^2\|_{_{1,\pi}}} \leq 2\ \frac{\| \nabla f
\|_{_{2,\phi}}}{\| f \|_{_{2,\pi}}}.$ }
\item[b {\rm )}]{$\frac{\|\stackrel{\rightarrow}{\nabla} f^2\|_{_{1,\phi}}}{\|f^2\|_{_{1,\pi}}}=\frac{\|\overline{\nabla}
f^2\|_{_{1,\overline{\phi}}}}{\|f^2\|_{_{1,\pi}}} \leq \sqrt{2}\
\frac{\| \overline{\nabla} f \|_{_{2,\overline{\phi}}}}{\| f
\|_{_{2,\pi}}}.$ }
\end{itemize}
\end{lem}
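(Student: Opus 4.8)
The plan is to reduce both inequalities to the elementary factorization $f^{2}(u)-f^{2}(v)=(f(u)-f(v))(f(u)+f(v))$ followed by the Cauchy--Schwarz inequality in the appropriate weighted $\ell^{2}$-spaces; the only real work is bookkeeping of the weights. First I would record the normalization identities that make the weighted sums collapse: since $\phi(u,v)=K(u,v)\pi(u)$ and $K$ is Markovian we have $\sum_{v}\phi(u,v)=\pi(u)$, while $\pi K=\pi$ gives $\sum_{u}\phi(u,v)=\pi(v)$, whence
$$\sum_{u,v \in V(G)} f(u)^{2}\,\phi(u,v)=\sum_{u,v \in V(G)} f(v)^{2}\,\phi(u,v)=\|f\|^{^{2}}_{_{2,\pi}}=\|f^{2}\|_{_{1,\pi}} .$$
In particular the denominators on both sides of (a) and (b) equal $\|f\|^{^{2}}_{_{2,\pi}}$, and we shall also use the pointwise bound $(f(u)+f(v))^{2}\le 2(f(u)^{2}+f(v)^{2})$, which by the identity above yields $\sum_{u,v}(f(u)+f(v))^{2}\phi(u,v)\le 4\|f\|^{^{2}}_{_{2,\pi}}$.

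For part (a) I would write $\|\simg f^{2}\|_{_{1,\phi}}=\sum_{u,v}|f(u)-f(v)|\,|f(u)+f(v)|\,\phi(u,v)$ and apply Cauchy--Schwarz with respect to $\phi$, bounding this by $\|\simg f\|_{_{2,\phi}}\bigl(\sum_{u,v}(f(u)+f(v))^{2}\phi(u,v)\bigr)^{1/2}\le 2\,\|\simg f\|_{_{2,\phi}}\,\|f\|_{_{2,\pi}}$; dividing by $\|f^{2}\|_{_{1,\pi}}=\|f\|^{^{2}}_{_{2,\pi}}$ gives the claim with constant $2$. For part (b), the equality $\|\dirg f^{2}\|_{_{1,\phi}}=\|\ssimg f^{2}\|_{_{1,\overline{\phi}}}$ is exactly part (a) of the preceding lemma applied to the function $f^{2}$. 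For the inequality I would run the same computation on $\overline{G}$, using the two facts from that lemma written out as $\|\ssimg g\|_{_{1,\overline{\phi}}}=\tfrac12\sum_{u,v}|g(u)-g(v)|\,\phi(u,v)$ and $\|\ssimg f\|^{^{2}}_{_{2,\overline{\phi}}}=\tfrac12\sum_{u,v}|f(u)-f(v)|^{2}\,\phi(u,v)$. Factoring $f^{2}(u)-f^{2}(v)$ as above, Cauchy--Schwarz gives $\|\ssimg f^{2}\|_{_{1,\overline{\phi}}}\le \tfrac12\bigl(2\|\ssimg f\|^{^{2}}_{_{2,\overline{\phi}}}\bigr)^{1/2}\bigl(4\|f\|^{^{2}}_{_{2,\pi}}\bigr)^{1/2}=\sqrt{2}\,\|\ssimg f\|_{_{2,\overline{\phi}}}\,\|f\|_{_{2,\pi}}$, and dividing by $\|f^{2}\|_{_{1,\pi}}$ yields the constant $\sqrt{2}$.

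There is no genuine obstacle; the one thing to watch is the constant tracking. A factor $2$ enters once from $(f(u)+f(v))^{2}\le 2(f(u)^{2}+f(v)^{2))}$ and once from summing the two collapsing identities, and in part (b) the overall constant is halved by the symmetrization $\overline{\phi}(u,v)=\tfrac12(\phi(u,v)+\phi(v,u))$, which is precisely the reason the bound improves from $2$ to $\sqrt{2}$. One should also note that $\phi$ vanishes off $E(G)$, so the sums over $V(G)\times V(G)$ above are the same as the sums defining the norms over $E(G)$, and likewise the sums against $\overline{\phi}$ over $E(\overline{G})$ rewrite as $\tfrac12$ times the corresponding ordered sums against $\phi$; this is what legitimizes substituting the lemma's identities into the Cauchy--Schwarz estimate.
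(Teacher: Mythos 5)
Your argument is correct and is essentially the paper's own proof: both rest on the factorization $f^{2}(u)-f^{2}(v)=(f(u)-f(v))(f(u)+f(v))$, Cauchy--Schwarz in the $\phi$-weighted sum, and the bound $\sum_{u,v}(f(u)+f(v))^{2}\phi(u,v)\le 4\|f\|^{^{2}}_{_{2,\pi}}$ coming from the flow identities $\sum_{v}\phi(u,v)=\pi(u)$ and $\sum_{u}\phi(u,v)=\pi(v)$. Your version merely makes the weight bookkeeping (and the reduction of part (a)) more explicit than the paper's single displayed computation; apart from a stray parenthesis in the inequality $(f(u)+f(v))^{2}\le 2(f(u)^{2}+f(v)^{2})$, there is nothing to correct.
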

\begin{proof}{The proof is clear by Lemma~\ref{LEMGER}(c), Cauchy-Schwarz inequality \\
(e.g. $(a+b)^2 \leq 2(a^2+b^2)$) and the following
$$
\begin{array}{rl}
\left ( \frac{\sqrt{2}\ \| \overline{\nabla} f
\|_{_{2,\overline{\phi}}}}{\| f \|_{_{2,\pi}}} \right )^2 & =
\frac{\displaystyle{\sum_{_{uv \in E(G)}}} |f(u)-f(v)|^2\
\phi(u,v)}{\displaystyle{\sum_{_{u \in V(G)}}} |f(u)|^2\
{\pi}(u)}\times \frac{\displaystyle{\sum_{_{uv \in E(G)}}}
|f(u)+f(v)|^2\ \phi(u,v)}{\displaystyle{\sum_{_{uv \in E(G)}}}
|f(u)+f(v)|^2\ \phi(u,v)}\\
 &\\
 &\geq \frac{\left ( \displaystyle{\sum_{_{uv \in E(G)}}}
|f(u)^2-f(v)^2|\ \phi(u,v)\right )^2}{\displaystyle{4}\ \left (
\displaystyle{\sum_{_{u \in V(G)}}} |f(u)|^2\ {\pi}(u) \right )
^2} = \left (\frac{\|\stackrel{\rightarrow}{\nabla}
f^2\|_{_{1,\phi}}}{\|f^2\|_{_{1,\pi}}} \right )^2.
\end{array}
$$
 }\end{proof}
\section{The isoperimetric spectrum}\label{ISOSPEC}
In this section we concentrate on the mean isoperimetric constant and its
generalization. In this regard, our point of view is to consider a
generalization that is, firstly, well-behaved computationally, and
secondly, can present a good relation to the classical
eigenvalues.
Throughout the section, $K$ is the kernel of a fixed Markov chain on
the base graph $G$ as before, and $\pi$ is a nowherezero stationary
distribution for this kernel.\\
 It is a well-known fact from random-matrix
theory and the recent literature that the behavior of the
classical spectrum of the Laplacian operator is quite hard to
predict and, as a matter of fact, is related to some deep
problems in contemporary mathematics \cite{DIA03}. In our
opinion, one possible approach in this direction is to analyze a
smooth function of the spectrum, that in a way contains a fair
amount of data, rather than the eigenvalues themselves.
Naturally, the most simple candidate for such a function can be
considered to be the {\it arithmetic mean}, and consequently,
there seems to be a fair chance that the behavior of the
mean-spectrum, whose $n$th element is the mean of the first
$n$ eigenvalues, be more well-behaved than the spectrum itself. We
should also mention the results of J.~B.~Hiriart-Urruty and D.~Ye
\cite{HY95} that, in a sense, justifies this approach.\\
Therefore, based on the above-mentioned approach we will focus on
the mean version of the isoperimetric constant and will generalize
it as the most natural $L^{1}$ counterpart of the mean eigenvalue.
It is interesting to note that this generalization leads to a
definition for the $n$th isoperimetric number which is based
on taking a minimum over all $n$-{\it disjoint} subsets of the
ground-space, rather than its $n$-partitions, and also satisfies a
Federer-Fleming-type theorem (Theorem~\ref{IOTAEQGAMMA}). This
difference, although disguised in the classical case $k=2$ (see Proposition~\ref{INEQCOR}), seems to
be quite nontrivial in general and will be our main motivation for
the definition of a {\it supergeometric graph}.\\
It is not hard to check that there is a straightforward translation
of almost all results of this section to the case of max-isoperimetric constants (see Section~\ref{SECGENCHEEGER}
for a precise definition) or the case of
compact Riemannian manifolds (considering appropriate modifications).\\
In what follows we introduce the generalized isoperimetric number
(in the mean case), and we investigate some of its basic properties.
To begin, we set a couple of notations.
The set ${\cal D}_{_{n}}(G)$ is
defined to be the set of all $n$-sets $\{Q_{_{1}},\ldots,Q_{_{n}}\}$
with $\emptyset \not = Q_{_{i}} \subseteq V(G)$ for all $1 \leq i
\leq n$ such that for every pair $1 \leq i < j \leq n$ we have
$Q_{_{i}} \cap Q_{_{j}} = \emptyset$. The set of {\it
$n$-partitions} of a graph $G$, which is denoted by ${\cal
P}_{_{n}}(G)$, is the subset of ${\cal D}_{_{n}}(G)$ that contains
all $n$-sets $\{Q_{_{1}},\ldots,Q_{_{n}}\}$ for which
$\cup_{_{i=1}}^{^{n}} Q_{_{i}}=V(G)$.\\
Now, we define the generalized mean isoperimetric constants as,
\begin{defin}{Given a graph $G$ and a kernel $K$,
the $n$th (mean) {\it isoperimetric constant} of $G$ (with respect
to $K$) is defined as follows
$$\iota_{_{n}}(G,K) \isdef
\displaystyle{\min_{_{ \{Q_{_{i}}\}^{^{n}}_{_{1}} \in  {\cal
D}_{_{n}}(G) }} } \ \frac{1}{n} \left (
\displaystyle{\sum^{n}_{i=1}} \frac{\dirp
(Q_{_{i}})}{\pi(Q_{_{i}})} \right ).$$
 Also, considering the partitions, we define the following related constant,
$$\tilde{\iota}_{_{n}}(G,K) \isdef \displaystyle{\min_{_{
\{Q_{_{i}}\}^{^{n}}_{_{1}} \in  {\cal P}_{_{n}}(G) }} } \
\frac{1}{n} \left ( \displaystyle{\sum^{n}_{i=1}} \frac{ \dirp
(Q_{_{i}})}{\pi(Q_{_{i}})} \right ).$$ We may exclude the kernel
$K$ from the notations, when it is fixed or is clear from the
\linebreak
context.
 }\end{defin}
 Some basic properties of the mean isoperimetric spectrum are
 stated in the following proposition.
\begin{pro}\label{INEQCOR}
For any graph $G$ $($and a given kernel $K$ on it$)$ and for all \\
$1 \leq n \leq |V(G)|$ we have,
\begin{itemize}
\item[a{\rm )}]{$\iota_{_n}(G) \leq \tilde{\iota}_{_{n}}(G)\leq (1-\frac{1}{n})
\  \iota_{_{n}}(G) +\frac{1}{n}.$ }
\item[b{\rm )}]{$\tilde{\iota}_{_{2}}(G)=\iota_{_{2}}(G).$ }
\item[c{\rm )}]{$\tilde{\iota}_{_{n}}(G) \leq (1-\frac{1}{n^2})\ \tilde{\iota}_{_{n+1}}(G) < \tilde{\iota}_{_{n+1}}(G).$ }
\item[d{\rm )}]{$\iota_{_{n}}(G) \leq \iota_{_{n+1}}(G).$ }
\end{itemize}
\end{pro}

\begin{proof}{The left-hand inequality of part $(a)$ is clear by
definitions. Assume that $\iota_{_n}(G)$ is achieved by choosing $\{Q_{_i}\}_{_1}^{^n} \in {\cal D}_{_{n}}(G)$ and suppose that $\displaystyle{\frac{\dirp (Q_{_n})}{\pi(Q_{_n})}}$ is maximum of all $\displaystyle{\frac{\dirp (Q_{_i})}{\pi(Q_{_i})}}$,  $i\in{\cal I}_n$.
Then the partition
$\{Q'_{_i}\}_{_1}^{^n} \in {\cal P}_{_{n}}(G)$ with $Q'_{_i} \isdef Q_{_i}$ for all $i \in {\cal
I}_{_{n-1}}$ and\\
 $Q'_{_n} \isdef V(G) - (\cup_{_{i=1}}^{^{n-1}}
Q_{_i})$ will satisfy
\[\tilde{\iota}_{_n}(G)\leq \frac{1}{n} \left(\frac{\dirp (Q'_{_n})}{\pi(Q'_{_n})}+\sum_{i=1}^{n-1}\frac{\dirp (Q_{_i})}{\pi(Q_{_i})}
\right)\leq \frac{1}{n}(1+(n-1)\ \iota_{_n}(G))=(1-\frac{1}{n})\  \iota_{_{n}}(G)+\frac{1}{n}.\]
For part~(b), let
$\{Q_{_1}, Q_{_2}\} \in {\cal D}_{_{2}}(G)$ be such that $\iota_{_2}(G)$
is achieved and let\\
 $Q^{^*} \isdef V(G) - (Q_{_1}\cup Q_{_2})$.
Without loss of generality,  assume that $\dirp(Q^{^*},Q_{_1})\leq
\dirp(Q_{_2},Q^{^*})$. Then for the partition $\{Q'_{_1},Q'_{_2}\}$
with $Q'_{_1} \isdef Q_{_1}$ and $Q'_{_2} \isdef Q_{_2}\cup Q^{^*}$ we have
\[2\ \tilde{\iota}_{_2}(G)\leq \sum_{i=1}^2\frac{\dirp (Q'_{_i})}{\pi(Q'_{_i})}
= \frac{\dirp (Q_{_1})}{\pi(Q_{_1})}+\frac{\dirp (Q_{_2},Q_{_1})+\dirp
(Q^{^*},Q_{_1})}{\pi(Q_{_2})+\pi(Q^*)}\leq\sum_{i=1}^2\frac{\dirp
(Q_{_i})}{\pi(Q_{_i})}=2\ \iota_{_2}(G).\] The reverse
inequality is clear from the definitions. \\
For part (c), let
$\{Q_{_i}\}_{_1}^{^{n+1}} \in {\cal P}_{_{n+1}}(G)$ be a partition such that
$\tilde{\iota}_{_{n+1}}(G)$ is achieved. For every pair of
indices $\{j,k\} \subseteq {\cal I}_{_{n+1}}$ define
$$T^{^n}_{_{j,k}} \isdef \frac{1}{n} \left ( \frac{\dirp (Q_{_{j}} \cup Q_{_{k}})}{\pi(Q_{_{j}} \cup Q_{_{k}})}
 + \displaystyle{\sum_{i \in {\cal I}_{_{n+1}}-\{j,k\}}} \frac{\dirp (Q_{_{i}})}{\pi(Q_{_{i}})}  \right ),$$
and let $w_{_{j,k}}=\pi(Q_{_j}\cup Q_{_k})$. Note that $\sum_{1 \leq j < k
\leq n+1}\ w_{_{j,k}}=n.$ Also
$$
\begin{array}{rl}
n\ \displaystyle{\sum_{1 \leq j < k \leq n+1}}\
w_{_{j,k}}T^{^n}_{_{j,k}} &=
                                             \displaystyle{\sum_{1\leq j < k \leq n+1}}\
                                                \dirp (Q_{_{j}} \cup
                                                Q_{_{k}})\\
                                                &\quad +
                                                \displaystyle{\sum_{1 \leq j < k \leq n+1}}\
                                                 \displaystyle{\sum_{i \in {\cal I}_{_{n+1}}-\{j,k\}}}\ \pi(Q_{_{j}} \cup Q_{_{k}})
                                                 \frac{\dirp (Q_{_{i}})}{\pi(Q_{_{i}})}\\
                                                & = (n-1) \displaystyle{\sum_{i=1}^{n+1}}\  \dirp
                                                (Q_{_{i}})\\
                                                &\quad +
                                                     \displaystyle{\sum_{i=1}^{n+1}}\
                                                     (n-1)(1-\pi(Q_{_{i}}))
                                                     \frac{\dirp (Q_{_{i}})}{\pi(Q_{_{i}})}\\
                                                &=
                                                      (n-1) \displaystyle{\sum_{i=1}^{n+1}}\
                                                     \frac{\dirp (Q_{_{i}})}{\pi(Q_{_{i}})}.
\end{array}
$$
Thus,
\[
 \tilde{\iota}_{_n}(G) \leq
\displaystyle{\min_{\{j,k\} \subseteq  {\cal I}_{_{n+1}}}}\
T^{^n}_{_{j,k}} \leq \frac{1}{n} \displaystyle{\sum_{1 \leq j < k
\leq n+1}}\ w_{_{j,k}}T^{^n}_{_{j,k}} \leq \frac{n^2-1}{n^2} \
\tilde{\iota}_{_{n+1}}(G).
\]
For part $(d)$, let
 $\{Q_{_{i}}\}_{_{i \in {\cal I}_{_{n+1}}}} \in{\cal D}_{_{n+1}}(G)$
 be chosen such that $\iota_{_{n+1}}(G)$ is achieved, and moreover, without loss of generality, assume that
 $$\frac{\dirp (Q_{_{1}})}{\pi(Q_{_{1}})} \leq \cdots
\leq \frac{\dirp (Q_{_{n+1}})}{\pi(Q_{_{n+1}})}.$$
 Then clearly,
 $$ \iota_{_{n}}(G) \le \frac{1}{n} \sum_{i=1}^{n} \frac{\dirp (Q_{_{i}})}{\pi(Q_{_{i}})} \leq
    \frac{1}{n+1} \sum_{i=1}^{n+1} \frac{\dirp (Q_{_{i}})}{\pi(Q_{_{i}})} = \iota_{_{n+1}}(G).$$
 }\end{proof}
\section{\label{EXAMPLES}Some examples and special cases}
As the first example, let us consider the case  $n=2$.
\begin{exm}{ Let $G$ be a given directed graph. We have
$$
\begin{array}{rl}
\tilde{\iota}_{_{2}}(G) &= \displaystyle{\min_{_{
                          \{Q_{_{i}}\}^{^{2}}_{_{1}} \in  {\cal P}_{_{2}}(G) }} } \
                          \frac{1}{2} \left ( \frac{ \dirp (Q_{_{1}})}{\pi(Q_{_{1}})}  +  \frac{ \dirp (Q_{_{2}})}{\pi(Q_{_{2}})} \right )\\
                        &=\displaystyle{\min_{_{ Q \subseteq V(G)}} } \
                           \frac{1}{2} \left ( \frac{ \dirp (Q)}{\pi(Q)}  +  \frac{ \dirp (Q^c)}{\pi(Q^c)} \right )\\
                        &=\displaystyle{\min_{_{ Q \subseteq V(G)}} } \
                           \frac{1}{2} \left ( \frac{ \dirp (Q)}{\pi(Q)}  +  \frac{ \dirp (Q)}{(1-\pi(Q))} \right )\\
                        &=\displaystyle{\min_{_{ Q \subseteq V(G)}} } \
                            \frac{ \dirp (Q)}{2 \pi(Q)(1-\pi(Q))},\\
\end{array}
$$
which is the (mean version) of the classical Cheeger constant. Therefore,
since we have $\tilde{\iota}_{_{2}}(G)=\iota_{_{2}}(G)$ by Proposition~\ref{INEQCOR}$(b)$, our definition of
the isoperimetric number for the classical case is justified.
 }\end{exm}
Given a strongly connected  directed graph $G$, we define the natural
random walk on $G$ by
\[K(u,v)=\left\{\begin{array}{ll}
\frac{1}{d^+(u)} & uv\in E(G)\\
0 & uv\not\in E(G),
\end{array}\right.\]
where $d^+(u)$ stands for the out-degree of vertex $u$. If the
graph $G$ is Eulerian, i.e. for every vertex $u\in V(G)$, we have
$d^+(u)=d^-(u)$, then one can easily see that the distribution
$\pi$, defined by $\pi(u)=\frac{d^+(u)}{|E(G)|}$ is the unique
stationary distribution for the natural random walk on $G$ which
induces the flow $\phi$ on $G$, defined by $\phi(u,v) \isdef \frac{1}{|E(G)|}$,
whenever $uv\in E(G)$ and zero elsewhere. Hence, for every
subset $Q\subseteq V(G)$, we have
\begin{equation}\label{NATURAL}
\frac{\dirp (Q)}{\pi(Q)}= \frac{|\odirb(Q)|}{d^+(Q)},
\end{equation}
where $d^+(Q) \isdef \sum_{u\in Q} d^+(u)$. Note that for any connected
undirected graph $G$, the symmetric directed graph $\sdg[G]$ is Eulerian, which shows that these arguments are valid in the case of undirected connected graphs as well.
\subsection{Geometric graphs}\label{GEOGRAPH}
By Proposition~\ref{INEQCOR}, for any given directed graph $G$
with a kernel $K$ and a  nowherezero stationary
distribution $\pi$ on it, one can talk about the isoperimetric
spectrum,
$$0=\iota_{_{1}}(G,K) \leq \iota_{_{2}}(G,K) \leq \ldots \leq \iota_{_{|V(G)|}}(G,K) \leq 1.$$
Also, note that if $G$ has no loops then $\iota_{_{|V(G)|}}(G,K) = 1$.
On the other hand, by definitions,
for any given graph $G$ and for all $1 \leq n \leq |V(G)|$, we have
$\iota_{_{n}}(G,K) \leq \tilde{\iota}_{_{n}}(G,K)$, that motivates the following
definition (also see Theorem~\ref{IOTAEQGAMMA} and Section~\ref{KMEANS}).
 \begin{defin}{
A graph $G$ is said to be {\it $n$-geometric} with respect to a
kernel $K$, if
$$\iota_{_{n}}(G,K) = \tilde{\iota}_{_{n}}(G,K).$$
 A graph $G$ is said to be {\it supergeometric} with respect to a
kernel $K$, if it is $n$-geometric with respect to $K$, for every $2
\leq n \leq |V(G)|$.
 }\end{defin}
 By definition
and  Proposition~\ref{INEQCOR}($b$), any strongly connected graph is $2$-geometric
(with respect to any given kernel $K$).
An easy observation is that for any graph without loops and with
respect to any kernel,
$$|Q|=1 \quad \Rightarrow \quad \frac{ \dirp (Q)}{\pi(Q)}=1.$$
This, for instance, shows that all simple graphs on a set of $6$
vertices are supergeometric. In what follows we elaborate on going
through the details of computing the mean isoperimetric constants of some
well-known graphs, to provide examples of supergeometric graphs as
well as cases for which the graph is far from being geometric.
\begin{exm}{\label{COMPLETE}
In this example we compute the  isoperimetric spectra of
complete graphs and complete bipartite graphs
with respect to their natural
random walks,  and we show that they are supergeometric.\\
 By Equation~(\ref{NATURAL}), for any
$\{Q_{_{i}}\}^{^{n}}_{_{1}} \in {\cal D}_{_{n}}(V({\sf K}_{_{t}})),$
with $|Q_{_{i}}|=t_{_{i}}$, we have
$$\sum_{i=1}^n \frac{\dirp (Q_{_i})}{ \pi(Q_{_i})}=\sum_{i=1}^n\frac{t_{_i}(t-t_{_i})}{ (t-1)t_{_i}}=\frac{tn-\sum_{_{i}} t_{_{i}}}{(t-1)},$$
which is clearly minimized when $\{Q_{_{i}}\}^{^{n}}_{_{1}} \in
{\cal P}_{_{n}}({\sf K}_{_{t}}).$ Therefore for all $n\in{\cal I}_{_t}$ we have
$$\iota_{_{n}}({\sf K}_{_{t}})=\tilde{\iota}_{_{n}}({\sf K}_{_{t}})=\frac{t(n-1)}{n(t-1)},$$
and complete graphs are supergeometric.\\
Now, let $X$ and $Y$ be the two parts of the graph
${\sf K}_{_{r,s}}$, with $|X|=r, |Y|=s$, and let
$\{Q_{_{i}}\}^{^{n}}_{_{1}} \in {\cal D}_{_{n}}(V({\sf
K}_{_{r,s}})),$ be such that $|Q_{_{i}}\cap X|=x_{_{i}}$ and
$|Q_{_{i}}\cap Y|=y_{_{i}}$. By Equation~(\ref{NATURAL}), we have
\[\frac{1}{n}\sum_{i=1}^n \frac{\dirp (Q_{_i})}{\pi(Q_{_i})}= \frac{1}{n}\sum_{i=1}^n\frac{x_{_i}(s-y_{_i})+y_{_i}(r-x_{_i})}{sx_{_i}+ry_{_i}}=
1-\frac{2}{n}\sum_{i=1}^n\frac{x_{_i}y_{_i}}{sx_{_i}+ry_{_i}}.\]
First, note that the function
$\frac{x_{_i}y_{_i}}{sx_{_i}+ry_{_i}}$ is increasing with respect
to both $x_{_i}$ and $y_{_i}$, and consequently, one deduces that complete bipartite graphs are supergeometric.\\
Furthermore, as a special case, let $s$ be a multiple of
$r$, where we want to maximize the function
$\frac{x_{_i}y_{_i}}{sx_{_i}+ry_{_i}}$ under constraints
$\sum_{i=1}^n x_{_i}=r$ and $\sum_{i=1}^n y_{_i}=s$. Using Lagrange method we can see that the function is maximized when $sx_{_i}=ry_{_i}$,
for every $i\in {\cal I}_{_n}$. Thus, for every $n\in {\cal I}_{_r}$, we
have
\[\iota_{_n}({\sf K}_{_{r,s}})=\tilde{\iota}_{_n}({\sf K}_{_{r,s}})=1-\frac{2}{n}\sum_{i=1}^n \frac{sx_{_i}}{r(s+s)}=1-\frac{1}{n}.\]
 }\end{exm}
\begin{exm}{
Let $G_{_t}=(V,E)$ be the directed cycle with loops, where
$V \isdef {\mathbb Z}/t{\mathbb Z}$ and $E \isdef \{(i,i), (i,i+1)|\ i\in V\}$.
Considering the natural random walk on $G_{_t}$, for every $Q\subset V$ we have
$$\frac{\dirp (Q)}{\pi(Q)}= \frac{s}{2|Q|},$$
where $s \isdef |\{i\in Q|\ i+1\not\in Q\}|$.
So when $Q\subsetneqq V$ is a nonvoid set of
consecutive numbers, this quotient is minimized and is equal to
$\frac{1}{2|Q|}$. Thus for every $2\leq n\leq t$,
\[\iota_{_n}(G_{_t})=\displaystyle{\min_{\{Q_{_{i}}\}^{^{n}}_{_{1}} \in {\cal D}_{_{n}}(G_{_{t}})}}\
\frac{1}{n}\sum_{i=1}^n \frac{\dirp (Q_{_i})}{\pi(Q_{_i})}=
\displaystyle{\min_{\{Q_{_{i}}\}^{^{n}}_{_{1}} \in {\cal D}_{_{n}}(G_{_{t}})}}\  \frac{1}{2n}\sum_{i=1}^n \frac{1}{|Q_{_i}|},\]
which is clearly minimized when $\{Q_{_{i}}\}^{^{n}}_{_{1}} \in
{\cal P}_{_{n}}(G_{_n})$. Consequently, the graph $G_{_t}$ is supergeometric and if $t=\lfloor\frac{t}{n}\rfloor
n+r$, for some $r$, then for every $2\leq n\leq t$,
$$\iota_{_{n}}({G}_{_{t}})=\tilde{\iota}_{_{n}}({G}_{_{t}})=\frac{1}{2n}\left( \frac{n-r}{\lfloor{\frac{t}{n}}\rfloor}+\frac{r}{\lfloor{\frac{t}{n}}\rfloor+1}\right).$$
 }\end{exm}
In the following example we introduce a graph which is $2$-geometric but not $3$-geometric.
\begin{figure}[t]
\unitlength=2pt
\begin{center}
\begin{picture}(90,52)
\drawshadedcircle{8.0}{10.0}{4.0}{}{1.0}
\drawshadedcircle{20.0}{10.0}{4.0}{}{1.0}
\drawshadedcircle{32.0}{10.0}{4.0}{}{1.0}
\drawshadedcircle{46.0}{10.0}{0.0}{}{1.0}
\drawshadedcircle{44.0}{10.0}{4.0}{}{1.0}
\drawshadedcircle{56.0}{10.0}{4.0}{}{1.0}
\drawshadedcircle{68.0}{10.0}{4.0}{}{1.0}
\drawshadedcircle{80.0}{10.0}{4.0}{}{1.0}
\drawshadedcircle{44.0}{22.0}{4.0}{}{1.0}
\drawshadedcircle{44.0}{34.0}{4.0}{}{1.0}
\drawshadedcircle{44.0}{46.0}{4.0}{}{1.0}
\drawpath{8.0}{10.0}{20.0}{10.0} \drawpath{20.0}{10.0}{80.0}{10.0}
\drawpath{44.0}{46.0}{44.0}{10.0}
\drawcenteredtext{8.0}{14.0}{$v_{_1}$}
\drawcenteredtext{20.0}{14.0}{$v_{_2}$}
\drawcenteredtext{32.0}{14.0}{$v_{_3}$}
\drawcenteredtext{80.0}{14.0}{$v_{_4}$}
\drawcenteredtext{68.0}{14.0}{$v_{_5}$}
\drawcenteredtext{56.0}{14.0}{$v_{_6}$}
\drawcenteredtext{38.0}{46.0}{$v_{_7}$}
\drawcenteredtext{38.0}{34.0}{$v_{_8}$}
\drawcenteredtext{38.0}{22.0}{$v_{_9}$}
\drawcenteredtext{44.0}{6.0}{$v_{_{10}}$}
\end{picture}
\end{center}
\caption{See Example~\ref{NONGEO}. \protect\label{STARG}}
\end{figure}
\begin{exm}{\label{NONGEO}
Consider the simple graph $G$ of Figure~\ref{STARG} along  with its natural random
walk, where we are going to compute $\iota_{_3}(G)$ and
$\tilde{\iota}_{_3}(G)$. By considering disjoint sets
$$\{A_{_i} \isdef \{v_{_{3i-2}},v_{_{3i-1}},v_{_{3i}}\}|\ i=1,2,3\},$$
 and
the partition
$$\{B_{_1} \isdef \{v_{_1},v_{_2},v_{_3}\},
B_{_2} \isdef \{v_{_4},v_{_5},v_{_6}\},
B_{_3} \isdef \{v_{_7},v_{_8},v_{_9},v_{_{10}}\}\},$$
 we have
\begin{eqnarray*}
\iota_{_3}(G)&\leq& \frac{1}{3}\sum_{i=1}^3 \frac{\dirp(A_{_i})}{
\pi(A_{_i})}=\frac{1}{3}(\frac{1}{5}+\frac{1}{5}+\frac{1}{5})=\frac{1}{5},\\
\tilde{\iota}_{_3}(G)&\leq& \frac{1}{3}\sum_{i=1}^3 \frac{\dirp(B_{_i})}{ \pi(B_{_i})}=\frac{1}{3}(\frac{1}{5}+\frac{1}{5}+\frac{2}{8})=\frac{13}{60}.
\end{eqnarray*}
It is easy to verify the following claims (by a case study) for a subset $Q \subseteq V(G)$,
$$
\begin{array}{lll}
|Q|=1 & \Rightarrow &  \displaystyle{\frac{\dirp(Q)}{\pi(Q)}} = 1,\\
|Q|=2 & \Rightarrow &  \displaystyle{\frac{\dirp(Q)}{\pi(Q)}}  \geq \frac{1}{3},\\
|Q| \in \{3,4,5\} & \Rightarrow &  \displaystyle{\frac{\dirp(Q)}{\pi(Q)}}  \geq \frac{1}{5}.\\
\end{array}
$$
To prove that  $\iota_{_3}(G)=\frac{1}{5}$ let $\{Q_{_1},Q_{_2},Q_{_3}\}$ be a set of disjoint subsets for which the minimum is achieved
with $|Q_{_{1}}| \leq |Q_{_{2}}| \leq |Q_{_{3}}|$. Then by the previous claim it is clear that $|Q_{_{1}}| \not = 1$ and hence we either
have $|Q_{_{1}}|=|Q_{_{2}}|=2$ or we must have $|Q_{_{3}}| \leq 5$. Hence, again by the previous claim in either case the mean flow is  greater than
or equal to $\frac{1}{5}$.\\
 By a similar case study,  one can characterize $3$ different kinds
of partitions as follows
\begin{eqnarray*}
&&|Q_{_1}|=2, |Q_{_2}|=3, |Q_{_3}|=5,\\
{\rm or }&&|Q_{_1}|=2, |Q_{_2}|=4, |Q_{_3}|=4,\\
{\rm or }&&|Q_{_1}|=3, |Q_{_2}|=3, |Q_{_3}|=4,
\end{eqnarray*}
which shows that $\tilde{\iota}_{_3}(G)$ is achieved for the partition $\{B_{_i}|\ i=1,2,3\}$.
 }\end{exm}
\begin{exm}{\label{NONGEO2}
In this example we show that, by modifying Example~\ref{NONGEO}, we can construct a graph $G$ for which
$\tilde{\iota}_{_{n}}(G) > \iota_{_{n+1}}(G)$.\\
Let $G_{_n}=(V_{_n},E_{_n})$ be a symmetric graph, where
$V_{_n} \isdef \{u,x_{_i},y_{_i},z_{_i},w_{_i}|\ 1\leq i\leq n\}$. For
every $i\in{\cal I}_{_n}$, the induced graph on
$\{x_{_i},y_{_i},z_{_i},w_{_i}\}$ is a path of length $3$ and
the vertex $u$ is adjacent to all vertices $x_{_j}$, for all $j\in{\cal I}_{_n}$.
For $i\in{\cal I }_{_{n-1}}$, let
$A_{_i} \isdef \{x_{_i},y_{_i},z_{_i},w_{_i}\}$ and also let
$A_{_n} \isdef \{x_{_n},y_{_n}\}$ and $A_{_{n+1}} \isdef \{z_{_n},w_{_n}\}$. Then,
\[\iota_{_{n+1}}(G_{_n})\leq \frac{1}{n+1}\sum_{i=1}^{n+1} \frac{\dirp(A_{_i})}{\pi(A_{_i})}=\frac{1}{n+1}\left(\frac{n-1}{7}+\frac{1}{3}+\frac{2}{4}\right). \]
Now, for $i\in{\cal I }_{_{n-1}}$, let
$B_{_i} \isdef \{x_{_i},y_{_i},z_{_i},w_{_i}\}$ and also let
$B_{_n} \isdef \{u,x_{_n},y_{_n},z_{_n},w_{_n}\}$. Then, by a similar
argument as in Example~\ref{NONGEO}, one can prove that
\[\tilde{\iota}_{_n}(G_{_n})=\frac{1}{n}\sum_{i=1}^{n} \frac{\dirp(B_{_i})}{\pi(B_{_i})}=\frac{1}{n}\left(\frac{n-1}{7}+\frac{n-1}{n+7}\right).\]
It is clear that if $n$ is large enough, then $\iota_{_{n+1}}(G_{_n})\leq (\frac{n}{n+1}) \tilde{\iota}_{_n}(G_{_n}) < \tilde{\iota}_{_n}(G_{_n})$.
}\end{exm}
\section{Computational aspects}\label{COMPUTATION}
\subsection{A Federer-Fleming-type theorem}\label{COAREA}
Our basic aim in this section is to find a functional definition
through proving a Federer-Fleming-type theorem. This not only is quite important theoretically (e.g. see \cite{Rot85})
 and along with Examples~\ref{NONGEO} and \ref{NONGEO2} justifies the correctness of our generalization, but also can be assumed as our first step
to approximate isoperimetric constants using well-chosen test functions.\\
First, we define a couple of function spaces as follows.
\begin{defin}{
We define the space of {\it unit positive functions} as,
$${\cal U}_{_{\pi}}^{^{+}}(G) \isdef \left \{ f \ \ | \ \ f \in  {\cal F}_{_{\pi}}^+(G)  \
{\rm and}\  \|f\|_{_{1,\pi}}=1  \right \}.$$ Also, a class of
functions $\{f_{_{i}}\}^{^{n}}_{_{1}}$ is called {\it positive
orthonormal}, if for all $1 \leq i \leq n$ we have $f_{_{i}} \in
{\cal U}_{_{\pi}}^{^{+}}(G) $ and moreover, for all pairs of indices
$i \not = j$ we have $f_{_{i}} \bot_{_{\pi}} f_{_{j}}$. In this
regard we define
$$\distf^+_{_{n}}(G) \isdef \left \{ \{f_{_{i}}\}^{^{n}}_{_{1}} \ \ | \ \
\{f_{_{i}}\}^{^{n}}_{_{1}} \ {\rm is\ positive\ orthonormal}
\right \}.$$
$$\tilde{{\cal O}}^+_{_{n}}(G) \isdef \left \{ \{f_{_{i}}\}^{^{n}}_{_{1}} \in  \distf^+_{_{n}}(G) \ \ | \ \
\{{\rm supp}(f_{_{i}})\}_{_{1}}^{^{n}} \in {\cal P}_{_{n}}(G) \right
\}.$$
 }\end{defin}
Now,
given a kernel $K$ and a  nowherezero stationary distribution $\pi$,
let us consider the following parameters which are naturally related
to the constants $\iota_{_{n}}(G)$ and $\tilde{\iota}_{_{n}}(G)$,
$$\gamma_{_{n}}(G,K) \isdef \displaystyle{\inf_{_{
\{f_{_{i}}\}^{^{n}}_{_{1}} \in  {\cal O}^+_{_{n}}(G) }} } \
\frac{1}{n} \left ( \displaystyle{\sum^{n}_{i=1}}\ \| \dirg f_{_{i}}
\|_{_{1,\phi}} \right ).$$
$$\tilde{\gamma}_{_{n}}(G,K) \isdef \displaystyle{\inf_{_{
\{f_{_{i}}\}^{^{n}}_{_{1}} \in  \tilde{{\cal O}}^+_{_{n}}(G) }} }
\ \frac{1}{n} \left ( \displaystyle{\sum^{n}_{i=1}} \ \| \dirg
f_{_{i}} \|_{_{1,\phi}} \right ).$$
As usual, we exclude the kernel
when it is fixed or is clear from the context. First, we prove the following technical result.
\begin{lem}\label{FLOWGRAD}
For every function $f\in {\cal F}^+_{_{\pi}}(G)$ with
$\|f\|_{_{1,\pi}}=1$, there is a set $Q\subseteq {\rm supp}(f)$,
such that
 $\displaystyle{\frac{\dirp (Q)}{\pi(Q)}}\leq\|\dirg f\|_{_{1,\phi}}$.
\end{lem}
\begin{proof}{
We prove the claim by induction on the size of
the range of $f$, $Range(f)$. If $f$ takes only two values $0$ and $t_{_1}$,
 then $t_{_1}=\frac{1}{\pi(A)}$, where $A \isdef f^{-1}(t_{_1})$ and $f=t_{_1}\ \chi_{_A}$.
The proof is straightforward in this case.\\
Now, let $Range(f)=\{0,t_{_1},\ldots,t_{_n}\}$ such that $0< t_{_1}\leq\ldots\leq t_{_n}$ and, moreover,  for each $i\in{\cal I}_{_{n}}$
let $A_{_i} \isdef f^{-1}(t_{_i})$ and $\pi_{_i} \isdef \pi(A_{_i})$.
Then,
 \begin{eqnarray*}
 \|\dirg f\|_{_{1,\phi}}&=&\sum_{_{uv\in E(G)}} (f(u)-f(v))^+\
 \phi(u,v)\\
 &=&c_{_1} t_{_1}+c_{_2} t_{_2}+\ldots+c_{_n}t_{_n},
 \end{eqnarray*}
 for some real numbers $c_{_i}$, where each
 $c_{_i}$ depends only on flows between the subsets $A_{_i}$.
  Now, our objective is to find the minimum of the function
 $$\psi(x_{_1},\ldots,x_{_n}) \isdef c_{_1}x_{_1}+\ldots+c_{_n}x_{_n}$$
  subject to the constraints
 \begin{equation}\label{cons}\left\{\begin{array}{l}
 0\leq x_{_1}\leq\ldots\leq x_{_n},\\[2mm]
 g(x)\isdef\sum_{i=1}^n \pi_{_i} x_{_i}-1=0.
 \end{array}\right.\end{equation}
First, note that if $\frac{c_{_1}}{\pi_{_1}}=\ldots=\frac{c_{_n}}{ \pi_{_n}}=\kappa$, then
 $\psi(x_{_1},\ldots,x_{_n})=\sum_{i}\ \kappa\pi_{_i} x_{_i}=\kappa$ everywhere. Therefore,
\[\|\dirg f\|_{_{1,\phi}}= \kappa=\frac{c_{_n}}{\pi_{_n}}=\frac{\dirp(A_{_n})}{
 \pi(A_{_n})}.\]
 Now assume there exist $i \not = j$ such that $\frac{c_{_i}}{\pi_{_i}} \not = \frac{c_{_j}}{\pi_{_j}}$.
By Lagrange method, the the minimum of $\psi$ subject to the constraints (\ref{cons}) is equal to the minimum of the function
$h(x,\lambda)=\psi(x)-\lambda g(x)$ under constraints
\[ 0\leq x_{_1}\leq\ldots\leq x_{_n}.\]
  Since we have $\partial h/\partial x_{_i}=c_{_i}-\lambda\pi_{_i}$, by assumption partial derivatives are not simultaneously zero, and consequently,
   $g$ attains its minimum on a boundary point $(s_{_1},\ldots,s_{_n})$, i.e. there exists $i_{_0}$ such that $s_{_{i_0}}=s_{_{i_0+1}}$. Now, define the function  $\hat{f}$ to be equal to $s_{_i}$ on $A_{_i}$ for all $i$ and  zero elsewhere.
  Then, using (\ref{cons}), we have  $\|\hat{f}\|_{_{1,\pi}}=1$ and
  \[\|\dirg \hat{f}\|_{_{1,\phi}}=c_{_1}s_{_1}+\ldots+c_{_n} s_{_n}.\]
  Therefore, by induction hypothesis we can find a subset $Q\subseteq {\rm supp}(\hat{f})\subseteq {\rm
  supp}(f)$ such that
  \[\frac{\dirp (Q)}{
 \pi(Q)}\leq\|\dirg \hat{f}\|_{_{1,\phi}}=\psi(s_{_1},\ldots,s_{_n})\leq \psi(t_{_1},\ldots,t_{_n})=\|\dirg f\|_{_{1,\phi}}.\]
 }\end{proof}
The following theorem
presents a functional definition for the mean isoperimetric spectrum.
\begin{thm}\label{IOTAEQGAMMA}
For any graph $G$ $($and a given kernel $K$ on it$)$ and  for all\\
$1 \leq n \leq |V(G)|$ we have
$$\iota_{_{n}}(G) = \gamma_{_{n}}(G)=\tilde{\gamma}_{_n}(G).$$
\end{thm}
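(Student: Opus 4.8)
The plan is to establish the two inequalities $\iota_{_{n}}(G)\le\gamma_{_{n}}(G)$ and $\gamma_{_{n}}(G)\le\iota_{_{n}}(G)$ separately, the whole argument resting on the discrete co-area (layer-cake) identity. First I would introduce, for $f\in{\cal F}_{_{\pi}}^+(G)$ and $t\ge 0$, the super-level set $Q_{f}(t)\isdef\{v\in V(G)\ |\ f(v)>t\}$ and record the two identities
$$\|\dirg f\|_{_{1,\phi}}=\int_{0}^{\infty}\dirp(Q_{f}(t))\,dt\qquad\text{and}\qquad\|f\|_{_{1,\pi}}=\int_{0}^{\infty}\pi(Q_{f}(t))\,dt .$$
The first follows by writing $(f(u)-f(v))^+=\int_{0}^{\infty}(\chi_{_{Q_{f}(t)}}(u)-\chi_{_{Q_{f}(t)}}(v))^+\,dt$, multiplying by $\phi(u,v)$, summing over the edges of $G$, and interchanging the sum with the integral; the second is the classical distribution-function identity for a nonnegative function. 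Since $G$ is finite, $t\mapsto Q_{f}(t)$ takes only finitely many values, so both integrands are step functions and these ``integrals'' are really finite sums, which removes any measure-theoretic delicacy.

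For $\gamma_{_{n}}(G)\le\iota_{_{n}}(G)$ I would take $\{Q_{_{i}}\}^{^{n}}_{_{1}}\in{\cal D}_{_{n}}(G)$ attaining $\iota_{_{n}}(G)$ and put $f_{_{i}}\isdef\chi_{_{Q_{_{i}}}}/\pi(Q_{_{i}})$. Each $f_{_{i}}$ is a nonzero nonnegative function with $\|f_{_{i}}\|_{_{1,\pi}}=1$, and because the $Q_{_{i}}$ are pairwise disjoint so are the supports of the $f_{_{i}}$, whence $f_{_{i}}\perp_{_{\pi}}f_{_{j}}$ for $i\ne j$; thus $\{f_{_{i}}\}^{^{n}}_{_{1}}\in{\cal O}^+_{_{n}}(G)$. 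A direct computation shows that $\dirg f_{_{i}}(uv)$ equals $1/\pi(Q_{_{i}})$ when $uv\in\odirb(Q_{_{i}})$ and vanishes otherwise, so $\|\dirg f_{_{i}}\|_{_{1,\phi}}=\dirp(Q_{_{i}})/\pi(Q_{_{i}})$, and averaging over $i$ gives $\gamma_{_{n}}(G)\le\frac{1}{n}\sum_{i=1}^{n}\dirp(Q_{_{i}})/\pi(Q_{_{i}})=\iota_{_{n}}(G)$.

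For the reverse inequality I would fix an arbitrary $\{f_{_{i}}\}^{^{n}}_{_{1}}\in{\cal O}^+_{_{n}}(G)$. For each $i$ the two identities above rewrite $\|\dirg f_{_{i}}\|_{_{1,\phi}}=\int_{0}^{\infty}\frac{\dirp(Q_{f_{_{i}}}(t))}{\pi(Q_{f_{_{i}}}(t))}\,\pi(Q_{f_{_{i}}}(t))\,dt$ as a weighted average of the ratios $\dirp(Q_{f_{_{i}}}(t))/\pi(Q_{f_{_{i}}}(t))$ against the weight $\pi(Q_{f_{_{i}}}(t))\,dt$, whose total mass is $\|f_{_{i}}\|_{_{1,\pi}}=1$ and which assigns no mass to the values of $t$ with $Q_{f_{_{i}}}(t)=\emptyset$. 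Hence there is a level $t_{_{i}}$ with $Q_{f_{_{i}}}(t_{_{i}})\ne\emptyset$ and $\dirp(Q_{f_{_{i}}}(t_{_{i}}))/\pi(Q_{f_{_{i}}}(t_{_{i}}))\le\|\dirg f_{_{i}}\|_{_{1,\phi}}$. Since orthogonality of nonnegative functions forces disjoint supports, $Q_{f_{_{i}}}(t_{_{i}})\subseteq{\rm supp}(f_{_{i}})$ makes the sets $Q_{f_{_{1}}}(t_{_{1}}),\ldots,Q_{f_{_{n}}}(t_{_{n}})$ pairwise disjoint, so they constitute an element of ${\cal D}_{_{n}}(G)$ and $\iota_{_{n}}(G)\le\frac{1}{n}\sum_{i=1}^{n}\dirp(Q_{f_{_{i}}}(t_{_{i}}))/\pi(Q_{f_{_{i}}}(t_{_{i}}))\le\frac{1}{n}\sum_{i=1}^{n}\|\dirg f_{_{i}}\|_{_{1,\phi}}$. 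Taking the infimum over ${\cal O}^+_{_{n}}(G)$ yields $\iota_{_{n}}(G)\le\gamma_{_{n}}(G)$, and combining the two bounds finishes the proof (and shows the minimum defining $\gamma_{_{n}}$ is attained at a family of normalized characteristic functions).

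The only two points that need genuine care — and hence the closest thing to an obstacle here — are: $(i)$ producing, for each $i$, a threshold $t_{_{i}}$ with $Q_{f_{_{i}}}(t_{_{i}})\ne\emptyset$ at which the ratio does not exceed the weighted average; this is exactly where the normalization $\|f_{_{i}}\|_{_{1,\pi}}=1$ enters (it makes $\pi(Q_{f_{_{i}}}(t))\,dt$ a probability measure supported precisely on those $t$ with $Q_{f_{_{i}}}(t)\ne\emptyset$), together with the finiteness of $G$ (so the relevant step function attains its infimum); and $(ii)$ the elementary but crucial observation that a positive orthonormal family is nothing but a family of unit positive functions with pairwise disjoint supports, which is what keeps the constructed level sets disjoint so that they lie in ${\cal D}_{_{n}}(G)$. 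Everything else is routine bookkeeping.
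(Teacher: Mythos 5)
Your proof is correct, and both halves rest on the same engine as the paper's: normalized characteristic functions for $\gamma_{_{n}}(G)\le\iota_{_{n}}(G)$, and the discrete co-area formula for the converse. The only real difference is in how the co-area identities are exploited. The paper works with the inequality $n\,\iota_{_{n}}(G)\prod_k\pi_{_{k,j_k}}\le\sum_k\bigl(\prod_{i\ne k}\pi_{_{i,j_i}}\bigr)\dirp_{_{k,j_k}}$ indexed by \emph{all} $n$-tuples of levels, multiplies by the product of the step lengths, and sums over the whole product grid, using $\|f_{_{i}}\|_{_{1,\pi}}=1$ to collapse the products; you instead decouple the coordinates, applying the mean-value (pigeonhole) principle separately to each $f_{_{i}}$ to extract a single nonempty level set $Q_{f_{_{i}}}(t_{_{i}})$ with $\dirp(Q_{f_{_{i}}}(t_{_{i}}))/\pi(Q_{f_{_{i}}}(t_{_{i}}))\le\|\dirg f_{_{i}}\|_{_{1,\phi}}$, and then bound term by term. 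Your decoupled version is logically equivalent here (it is the ``there exists a good tuple'' form of the paper's ``average over all tuples'' computation) but is shorter and avoids the multi-index bookkeeping; the paper's version makes the role of the product measure $\prod_k s_{_{j_k}}\pi_{_{k,j_k}}$ explicit. You also correctly flag the two points that make the argument go through: the normalization turns $\pi(Q_{f_{_{i}}}(t))\,dt$ into a probability measure supported where the level sets are nonempty, and positive orthonormality forces disjoint supports so the selected level sets land in ${\cal D}_{_{n}}(G)$.
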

\begin{proof}{
By considering characteristic functions
 of sets we have $\gamma_{_{n}}(G)\leq \iota_{_{n}}(G)$. To prove
 equality, let $\{f_{_i}\}_{_1}^{^n}\in {\cal O}^+_{_n}(G)$ be chosen such that $\gamma_{_{n}}(G)$ is achieved.
 By Lemma~\ref{FLOWGRAD}, for every $i\in {\cal I}_{_n}$, there exists a set $Q_{_i}\subseteq {\rm supp}(f_{_i})$,
such that $\displaystyle{\frac{\dirp (Q_{_i})}{\pi(Q_{_i})}}\leq\|\dirg
f_{_i}\|_{_{1,\phi}}$. Since $\{f_{_i}\}_1^n$ is positive orthonormal and
$Q_{_i}\subseteq {\rm supp}(f_{_i})$, we conclude that $Q_{_i}$'s are disjoint subsets, and consequently,
\[\iota_{_{n}}(G)\leq \frac{1}{n}\sum_{i=1}^n \frac{\dirp (Q_{_i})}{ \pi(Q_{_i})}\leq \frac{1}{n}\sum_{i=1}^n \|\dirg
f_{_i}\|_{_{1,\phi}}=\gamma_{_n}(G).\]
For the second equality, by definition we have
$\iota_{_n}(G)=\gamma_{_n}(G)\leq\tilde{\gamma}_{_n}(G)$. Now, let
$\{Q_{_i}\}_{_1}^{^n}$ be a set of disjoint sets for which
$\iota_{_{n}}(G)$ is achieved. Also, let
$$ Q' \isdef \bigcup_{i=1}^{n-1} Q_{_i}, \quad  Q^{^*} \isdef V(G) -  Q',$$
 and
$0<\epsilon<\frac{1}{\pi(Q_{_n}\cup Q^{^*})}$ be an arbitrary fixed number.
For $i\in{\cal I}_{_{n-1}}$, define functions $\{g_{_i}\}_{_1}^{^n}\in
\tilde{{\cal O}}^+_{_{n}}(G)$ as $g_{_i} \isdef \frac{1}{\pi(Q_{_i})}\
\chi_{_{Q_{_i}}}$, and
\[
g_{_n}(u) \isdef \left\{\begin{array}{ll}\frac{1-\epsilon\ \pi(Q^{^*})}{
\pi(Q_{_n})}&\ u\in Q_{_n}\\\epsilon&\ u\in Q^{^*}\\0 &\ u\in
Q'.\end{array}\right.\] Therefore, we have
\begin{eqnarray*}
n\ \tilde{\gamma}_{_n}(G)&\leq&  \displaystyle{\sum^{n}_{i=1}} \
\| \dirg g_{_{i}} \|_{_{1,\phi}} \\&=&  \sum^{n-1}_{i=1}
\frac{\dirp(Q_{_i})}{\pi(Q_{_i})}+\frac{1-\epsilon\ \pi(Q^{^*})}{
\pi(Q_{_n})}\ \dirp(Q_{_n})+\epsilon \left(\dirp(Q^{^*},Q')-
\dirp(Q_{_n},Q^{^*})\right),
\end{eqnarray*}
and by tending $\epsilon$ to zero we get
\[\tilde{\gamma}_{_n}(G)\leq\frac{1}{n} \left( \sum^{n}_{i=1}
\frac{\dirp(Q_{_i})}{\pi(Q_{_i})}\right)=\iota_{_n}(G).\]
 }\end{proof}
This result along with Examples~\ref{NONGEO} and \ref{NONGEO2} show that the natural and correct definition of the mean isoperimetric constants
is what is defined in terms of minimization over {\it disjoint} subsets of the domain and not {\it partitions}.
\subsection{Spectral bounds and Cheeger-type inequalities}\label{SECGENCHEEGER}
In this section  we consider the problem of approximating the isoperimetric constants of a graph using its Laplacian spectrum
and some more information from the eigenspaces. As a by product of this, we also prove generalized versions of
Cheeger inequality for the isoperimetric
spectrum. To begin, let us recall an interesting
variational principle due to Ky Fan.
\begin{alphthm}\label{KYFAN}{\rm (e.g. see \cite{BAH97}) Ky Fan's minimum principle}\\
Let $A \in {\rm End(V)}$ be a self-adjoint matrix operating on the
$\nu$-dimensional inner-product space $V$, and let
$$\lambda_{_{1}} \leq \lambda_{_{2}} \leq \cdots \leq \lambda_{_{\nu}},$$
be the set of eigenvalues of $A$ ordered in an increasing order.
Then for any $1 \leq n \leq \nu$ we have
$$ \overline{\lambda}_{_{n}} =\frac{1}{n}\left ( \displaystyle{\min_{UU^*=id_{_{n}}}}\ \tr(UAU^*) \right ),$$
where $\overline{\lambda}_{_{n}}$ is the average of the $n$
smallest eigenvalues of $A$, $U$ is an arbitrary $n \times \nu$
matrix, $id_{_{n}}$ is the $n \times n$ identity matrix, and
$(\tr)$ is the trace function.
\end{alphthm}
Note that another way of expressing Ky Fan's result is that,
subject to the same conditions of Theorem~\ref{KYFAN},
$$\overline{\lambda}_{_{n}}=\frac{1}{n}\left ( \displaystyle{\min_{\genfrac{}{}{0pt}{}{\ f_{_{i}} \bot f_{_{j}}}{f_{_{i}} \not = 0}} \sum^{n}_{i=1}}\
\frac{\langle A f_{_{i}},f_{_{i}}\rangle}{\| f_{_{i}} \|^{^2}_{_{2}}} \right
).$$
The second important fact is related to  the concept of a {\it nodal
domain} (e.g. see \cite{BLS07} and references therein). It is interesting to note that in the
continuous case, the eigenfunctions of the ordinary Laplacian (say
of a compact Riemannian manifold) is always a continuous function
(essentially smooth) and by Rolle's theorem there is always a zero
point between any two points with different signs. This fact, in a
way, justifies the study of connected components of $f^{-1}(0)$ (as
{\it nodal regions} \cite{DETAL,FRI,JNT01}) for any eigenfunction
$f$ in the continuous case. However, when we are dealing with a
discontinuous object as a graph, an eigenfunction can have opposite
signs on the two endpoints of an edge, where this, on the one hand,
makes the whole thing more complex, and on the other hand, it makes
the space of eigenfunctions far richer.
\begin{defin}{
If $f \in {\cal F}_{_{\pi}}(G)$ and $Q \subseteq V(G)$, the pair
$(Q,Q^c)$ is called a {\it bipolar cut-set} for $f$ if for any edge
 $uv \in \sb(Q)$ we have $f(u)f(v) \leq 0$. Also, a subset $Q$  is
called a {\it nonnegative} ({\it nonpositive}) {\it bipolar part} of
$f$ if
 $f_{_{1}} \isdef f|_{_{Q}}$ is a nonnegative (nonpositive) function on
$Q$  and $(Q,Q^c)$ is a bipolar cut-set for $f$. A {\it signed} part
of $f$ is a subset $Q$ that is either  a nonnegative or a
nonpositive  bipolar part of $f$. Note that in this case
$f=f_{_{1}}+f_{_{2}}$ where $f_{_{2}} \isdef f|_{_{Q^c}}$ and
$f_{_{1}} \bot f_{_{2}}$.
Also, note that any {\it strong sign-graph} of $f$ is clearly a signed part of $f$
(e.g. see \cite{BLS07} for the definitions, other variations and background).\\
 For a given real number $\zeta \in {\mathbb R}$, a real function  $f \in {\cal
F}(G)$ is said to be {\it $\zeta$-excessive} (resp. {\it
$\zeta$-deficient}) for the kernel $K$ if $K f \leq \zeta f$ (resp.
$K f \geq \zeta f$). By abuse of language, a $\zeta$-excessive
(resp. $\zeta$-deficient) function for $\siml$ is just referred to
as a $\zeta$-excessive (resp. $\zeta$-deficient) function, if
details are clear from the context.
 }\end{defin}
We will use the following lemma to prove a generalized Cheeger
inequality later. It is instructive to mention  that the lemma can also be deduced as a corollary of the well-known
Duval-Reiner lemma (e.g. see \cite{DR99} for the lemma and \cite{BLS07} for the history, erratum and a more detailed discussion).
\begin{lem}
\label{DRCOR} Let $G$ be graph, and $f \in {\cal F}(G)$ be a
$\zeta$-excessive $($resp. $\zeta$-deficient$)$ function  for
$\siml$, such that a subset $Q \subseteq V(G)$ is a nonnegative
$($resp. nonpositive$)$ bipolar part of $f$. Then, assuming $g
\isdef f|_{_{Q}}$ we have
$$\zeta \geq \frac{\|\ssimg g \|^{^2}_{_{2,\overline{\phi}}}}{\| g \|^{^2}_{_{2,\pi}}}.$$
\end{lem}
\begin{proof}{Let $Q$ be a nonnegative bipolar part of the $\zeta$-excessive function $f$.
If $u\in Q$ then $g(u)=f(u)$ and $g(v)\geq f(v)$ for all
neighbors $v$ of $u$, and so $\siml g(u)\leq \siml f(u)\leq\zeta
f(u)=\zeta g(u)$. Also if $u\not\in Q$ then $g(u)=0$, and since
$g\geq 0$, trivially $\siml g(u)\leq \zeta g(u)$. Hence by
Lemma~\ref{LEMGER}(c) we have
\[\|\ssimg g
\|^{^2}_{_{2,\overline{\phi}}}=\langle \siml g,
g\rangle_{_\pi}\leq \zeta \langle g,g\rangle_{_\pi}=\zeta \| g
\|^{^2}_{_{2,\pi}}.\]
 }\end{proof}
The following is the first half of the generalized Cheeger
inequality.
\begin{thm}\label{CHEEGER1}
For any given graph $G$ we have $\overline{\lambda}_{_{n}} \leq
\iota_{_{n}}(G).$
\end{thm}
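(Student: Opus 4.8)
The plan is to read off the bound directly from Ky Fan's minimum principle (Theorem~\ref{KYFAN}), using characteristic functions of an optimal family of disjoint sets as test vectors. Since $\overline{\lambda}_{_{n}}$ is the \emph{minimum} of the averaged Rayleigh quotient $\frac{1}{n}\sum_{i}<\Delta f_{_{i}},f_{_{i}}>_{_{\pi}}/\|f_{_{i}}\|^{^2}_{_{2,\pi}}$ over all pairwise orthogonal nonzero $n$-tuples in ${\cal F}_{_{\pi}}(G)$ (here $n\leq|V(G)|$ as required), it suffices to produce one such family whose averaged quotient is at most $\iota_{_{n}}(G)$.

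Concretely, I would fix $\{Q_{_{i}}\}^{^{n}}_{_{1}}\in{\cal D}_{_{n}}(G)$ attaining $\iota_{_{n}}(G)$ and set $g_{_{i}}\isdef\chi_{_{Q_{_{i}}}}$. As the $Q_{_{i}}$ are nonempty and pairwise disjoint, the $g_{_{i}}$ are nonzero and pairwise $\pi$-orthogonal, with $\|g_{_{i}}\|^{^2}_{_{2,\pi}}=\pi(Q_{_{i}})$. The one identity that must be checked is
$$<\Delta g_{_{i}},g_{_{i}}>_{_{\pi}}=\dirp(Q_{_{i}}).$$
By Lemma~\ref{LAPLACADJ} and the basic norm identities for the symmetric gradient recorded before Lemma~\ref{LEMNORM2}, $<\Delta g_{_{i}},g_{_{i}}>_{_{\pi}}=\|\ssimg g_{_{i}}\|^{^2}_{_{2,\overline{\phi}}}$; since $\ssimg\chi_{_{Q_{_{i}}}}$ is $\{0,1\}$-valued, this $L^{^2}$ Dirichlet form collapses to the $L^{^1}$ one, $\|\ssimg\chi_{_{Q_{_{i}}}}\|_{_{1,\overline{\phi}}}=\|\dirg\chi_{_{Q_{_{i}}}}\|_{_{1,\phi}}$, and the right-hand side is $\dirp(Q_{_{i}})$ because $\dirg\chi_{_{Q_{_{i}}}}$ is exactly the indicator of $\odirb(Q_{_{i}})$. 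Hence the normalized Rayleigh quotient of $g_{_{i}}$ equals $\dirp(Q_{_{i}})/\pi(Q_{_{i}})$.

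Feeding the orthogonal family $\{g_{_{i}}\}^{^{n}}_{_{1}}$ into Ky Fan's principle for the self-adjoint operator $\Delta$ on ${\cal F}_{_{\pi}}(G)$ then gives
$$\overline{\lambda}_{_{n}}\leq\frac{1}{n}\displaystyle{\sum^{n}_{i=1}}\frac{<\Delta g_{_{i}},g_{_{i}}>_{_{\pi}}}{\|g_{_{i}}\|^{^2}_{_{2,\pi}}}=\frac{1}{n}\displaystyle{\sum^{n}_{i=1}}\frac{\dirp(Q_{_{i}})}{\pi(Q_{_{i}})}=\iota_{_{n}}(G),$$
which is the assertion.

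I do not anticipate a real obstacle: this is the ``easy half'' of the generalized Cheeger inequality. The one point to be careful about is \emph{not} to run the argument through $\gamma_{_{n}}(G)$ and a general optimal family $\{f_{_{i}}\}\in{\cal O}^+_{_{n}}(G)$, because for a general positive function the comparison between $\|\dirg f\|_{_{1,\phi}}$ and $\|\ssimg f\|_{_{2,\overline{\phi}}}$ (Lemma~\ref{LEMNORM2}) points the wrong way; the proof works precisely because for characteristic functions the $L^{^1}$ and $L^{^2}$ Dirichlet forms agree. The genuinely harder reverse inequality is the one that would require co-area/square-root techniques and the notion of a nodal domain, consistent with the paper's subsequent discussion.
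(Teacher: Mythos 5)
Your proposal is correct and is essentially the paper's own proof: the paper likewise applies Ky Fan's minimum principle to the (normalized) characteristic functions $h_{_{i}}=\chi_{_{Q_{_{i}}}}/\pi(Q_{_{i}})$ of an optimal disjoint family, computing each Rayleigh quotient as $\dirp(Q_{_{i}})/\pi(Q_{_{i}})$ exactly as you do. The only differences are cosmetic (you skip the normalization, which is immaterial since the quotient is scale-invariant, and you spell out the $L^{^1}$--$L^{^2}$ collapse for indicators in slightly more detail).
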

\begin{proof}{
Let $\{Q_{_{i}}\}^{^{n}}_{_{1}} \in  {\cal D}_{_{n}}(G)$ be chosen
such that
$$\iota_{_{n}}(G)= \frac{1}{n}
\displaystyle{\sum^{n}_{i=1}} \frac{\dirp
(Q_{_{i}})}{\pi(Q_{_{i}})}.$$
 For every $i \in {\cal I}_{_{n}}$ define
$$h_{_{i}}(u)=\chi_{_{Q_{_{i}}}} \isdef \left \{ \begin{array}{ll}
 1 & u \in Q_{_{i}}  \\
 0 & u \not \in Q_{_{i}}. \end{array}\right. $$
Now, for each $i \in {\cal I}_{_{n}}$ we have $\| h_{_{i}} \|^{^2}_{_{2,\pi}}=\pi(Q_{_{i}})$ and by Lemma~\ref{LEMGER},
$$\| \ssimg h_{_{i}} \|^{^2}_{_{2,\overline{\phi}}}=\| \ssimg h_{_{i}} \|_{_{1,\overline{\phi}}}= \| \dirg h_{_{i}}\|_{_{1,\phi}}= \dirp(Q_{_{i}}).$$
 Hence, by Ky Fan's minimum principle,
$$n\ \overline{\lambda}_{_{n}}=\displaystyle{\min_{f_{_{i}} \bot_{_{\pi}} f_{_{j}}} \sum^{n}_{i=1}}\
\frac{\langle\siml f_{_{i}},f_{_{i}}\rangle_{_{\pi}}}{\| f_{_{i}}
\|^{^2}_{_{2,\pi}}} \leq  \displaystyle{\sum^{n}_{i=1}}\ \frac{\| \ssimg h_{_{i}} \|^{^2}_{_{2,\overline{\phi}}}}{\| h_{_{i}} \|^{^2}_{_{2,\pi}}}
= n\ \iota_{_{n}}(G).$$
 }\end{proof}
Note that for the complete graph ${\sf K}_{_t}$, we have $\lambda_{_i}=\frac{t}{t-1}$,
for all $2\leq i\leq t$. Also, for the complete bipartite graph ${\sf
K}_{r,s}$, we have $\lambda_{_i}=1$, for all $2\leq i\leq r+s-1$ and
$\lambda_{_{r+s}}=2$. Thus, by Example~\ref{COMPLETE}, for the complete
graph ${\sf K}_{_t}$, when $1\leq n\leq t$ and also, for the complete
bipartite graph ${\sf K}_{_{r,s}}$, when $s$ is a multiple of $r$
and $1\leq n\leq r$, equality holds in Theorem~\ref{CHEEGER1}, which shows that the result is sharp.\\
For the second half of a generalized Cheeger inequality we need the following definition.
\begin{defin}{
Let $\Gamma = (\zeta_{_{1}},\zeta_{_{2}},\ldots ,\zeta_{_{n}})$ be
an $n$-list of real numbers. Then an $n$-list of real functions $F=
(f_{_{1}},f_{_{2}},\ldots ,f_{_{n}})$ on a domain $X$ along with $n$
disjoint subsets ${\cal Q} = (Q_{_{1}},Q_{_{2}},\ldots ,Q_{_{n}})$
such that $Q_{_{i}} \subseteq X$, is called a {\it compatible transverse set of functions} for $\siml$, if:
\begin{itemize}
\item{$f_{_{i}}|_{_{Q_{_{i}}}} \not = 0.$}
\item{For each $1 \leq i \leq n$, the function
$f_{_{i}}$ is a $\zeta_{_{i}}$-excessive  (resp. $\zeta$-deficient) function (with respect to $\siml$) on $X$.}
\item{For each $1 \leq i \leq n$, the subset $Q_{_{i}}$ is a nonnegative (resp. nonpositive)
 bipolar part of $f_{_{i}}$.}
\end{itemize}
 }\end{defin}
\begin{thm}\label{CHEEGER2}
Consider a graph $G$ and let $\Gamma
=(\zeta_{_{1}},\zeta_{_{2}},\ldots ,\zeta_{_{n}})$. If
$F=(f_{_{1}},f_{_{2}},\ldots ,f_{_{n}})$ along with ${\cal
Q}=(Q_{_{1}},Q_{_{2}},\ldots ,Q_{_{n}})$ is a compatible transverse
set of functions for $\siml$, then
 $$2\ \overline{\zeta}_{_{n}}\   \geq \  \iota_{_{n}}(G)^2.$$
\end{thm}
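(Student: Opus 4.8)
The plan is to read this as the ``hard half'' of a Cheeger-type inequality and to run the classical squaring trick levelwise, feeding it through the functional description $\iota_{_{n}}(G)=\gamma_{_{n}}(G)$ supplied by Theorem~\ref{IOTAEQGAMMA}. The governing idea is that each $f_{_{i}}$ restricted to its bipolar part $Q_{_{i}}$ has a Rayleigh quotient bounded by $\zeta_{_{i}}$ (which is exactly Corollary~\ref{DRCOR}), and that, since the $Q_{_{i}}$ are pairwise disjoint, the squared restrictions furnish a legitimate $n$-tuple in ${\cal O}^+_{_{n}}(G)$; Lemma~\ref{LEMNORM2}$(b)$ then bounds the total $L^{^1}$-gradient of that tuple by $\sum_{i}\sqrt{\zeta_{_{i}}}$, and a single Cauchy--Schwarz converts this to $n\sqrt{\overline{\zeta}_{_{n}}}$.

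Concretely, I would first set $g_{_{i}} \isdef f_{_{i}}|_{_{Q_{_{i}}}}$; by the definition of a compatible orthogonal set $g_{_{i}} \not = 0$ and $g_{_{i}}$ has constant sign on its support, so $g_{_{i}}^{2}$ is a nonzero nonnegative function with ${\rm supp}(g_{_{i}}^{2}) \subseteq Q_{_{i}}$. Because the $Q_{_{i}}$ are pairwise disjoint, the normalized functions $h_{_{i}} \isdef g_{_{i}}^{2}/\|g_{_{i}}^{2}\|_{_{1,\pi}}$ are pairwise $\pi$-orthogonal and each has unit $\|\cdot\|_{_{1,\pi}}$-norm, hence $\{h_{_{i}}\}^{^{n}}_{_{1}} \in {\cal O}^+_{_{n}}(G)$. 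Theorem~\ref{IOTAEQGAMMA} then gives
$$\iota_{_{n}}(G)=\gamma_{_{n}}(G) \leq \frac{1}{n} \displaystyle{\sum^{n}_{i=1}} \|\dirg h_{_{i}}\|_{_{1,\phi}} = \frac{1}{n} \displaystyle{\sum^{n}_{i=1}} \frac{\|\dirg g_{_{i}}^{2}\|_{_{1,\phi}}}{\|g_{_{i}}^{2}\|_{_{1,\pi}}}.$$
Lemma~\ref{LEMNORM2}$(b)$ bounds each summand by $\sqrt{2}\,\|\ssimg g_{_{i}}\|_{_{2,\overline{\phi}}}/\|g_{_{i}}\|_{_{2,\pi}}$, and Corollary~\ref{DRCOR}, applied to the $\zeta_{_{i}}$-excessive (resp. $\zeta_{_{i}}$-deficient) function $f_{_{i}}$ with its nonnegative (resp. nonpositive) bipolar part $Q_{_{i}}$, yields $\|\ssimg g_{_{i}}\|^{^2}_{_{2,\overline{\phi}}}/\|g_{_{i}}\|^{^2}_{_{2,\pi}} \leq \zeta_{_{i}}$ (so in particular $\zeta_{_{i}} \geq 0$). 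Thus $\iota_{_{n}}(G) \leq \frac{\sqrt{2}}{n} \sum_{i=1}^{n} \sqrt{\zeta_{_{i}}}$, and Cauchy--Schwarz gives $\sum_{i=1}^{n} \sqrt{\zeta_{_{i}}} \leq \sqrt{n}\,(\sum_{i=1}^{n}\zeta_{_{i}})^{1/2} = n\sqrt{\overline{\zeta}_{_{n}}}$, whence $\iota_{_{n}}(G) \leq \sqrt{2\,\overline{\zeta}_{_{n}}}$, i.e. $\iota_{_{n}}(G)^{2} \leq 2\,\overline{\zeta}_{_{n}}$.

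The only place where I expect genuine care to be needed is the verification that $\{h_{_{i}}\}^{^{n}}_{_{1}}$ really is an admissible element of ${\cal O}^+_{_{n}}(G)$: one has to invoke the hypothesis $f_{_{i}}|_{_{Q_{_{i}}}} \not = 0$ (so that the normalization is defined and each $h_{_{i}}$ is a bona fide unit positive function), and one must use the disjointness of the subsets $Q_{_{i}}$ rather than of the full supports of the $f_{_{i}}$, which may well overlap; it is precisely this step — never needing to complete the $Q_{_{i}}$ to a partition — that makes the disjoint-set constant $\iota_{_{n}}$ (as opposed to $\tilde{\iota}_{_{n}}$) the right quantity on the right-hand side. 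A minor secondary point is to confirm that Corollary~\ref{DRCOR} is stated so as to cover the excessive/nonnegative and deficient/nonpositive cases uniformly, so that the eigenvalue bound $\zeta_{_{i}} \geq \|\ssimg g_{_{i}}\|^{^2}_{_{2,\overline{\phi}}}/\|g_{_{i}}\|^{^2}_{_{2,\pi}}$ is available for each index $i$ irrespective of which sign convention $f_{_{i}}$ satisfies.
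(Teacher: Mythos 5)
Your proposal is correct and is essentially the paper's own proof run in the opposite direction: the paper starts from $\overline{\zeta}_{_{n}}$ and chains Corollary~\ref{DRCOR}, Lemma~\ref{LEMNORM2}$(b)$, and Cauchy--Schwarz down to $\frac{1}{2}\iota_{_{n}}(G)^2$, using exactly the functions $g_{_{i}}=f_{_{i}}|_{_{Q_{_{i}}}}$ and their squares. Your added care about $\{g_{_{i}}^2/\|g_{_{i}}^2\|_{_{1,\pi}}\}$ being a legitimate element of ${\cal O}^+_{_{n}}(G)$ (via Theorem~\ref{IOTAEQGAMMA}) is a step the paper leaves implicit, and is handled correctly.
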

\begin{proof}{
Let $0 \not = g_{_{i}} \isdef f_{_{i}}|_{_{Q_{_{i}}}}$. Then,
$$
\begin{array}{rl}
\overline{\zeta}_{_{n}} &          \geq
 \frac{1}{n}\   {
                         \displaystyle{\sum^{n}_{i=1}}\
                         \frac{\| \ssimg g_{_{i}} \|^{^2}_{_{2,\overline{\phi}}}}{\| g_{_{i}}
                         \|^{^2}_{_{2,\pi}}}  }
                    \geq  \frac{1}{2n}\  { \displaystyle{\sum^{n}_{i=1}}\
                         \frac{\| \dirg g^2_{_{i}} \|^{^2}_{_{1,\phi}}}{\| g^2_{_{i}}
                         \|^{^2}_{_{1,\pi}}} }\\
                           & \\
                         & \geq  \frac{1}{2}\ \left ( \frac{1}{n}\ { \displaystyle{\sum^{n}_{i=1}}\
                         \frac{\| \dirg g^2_{_{i}} \|_{_{1,\phi}}}{\| g^2_{_{i}}
                         \|_{_{1,\pi}}} } \right )^2    \geq  \frac{1}{2}\  \iota_{_{n}}(G)^2,\\
\end{array}
$$
where the first and the second inequalities follow from
Lemma~\ref{DRCOR} and \ref{LEMNORM2}, respectively, and the
third one is a direct application of Cauchy-Schwarz inequality.
 }\end{proof}
It ought to be noted that Theorems~\ref{CHEEGER1} and \ref{CHEEGER2} together, can be considered as a {\it generalized} Cheeger inequality.
In what follows we deduce a special case where one may get an explicit inequality for the mean spectrum.
\begin{thm}\label{GENCHEEGER}
Consider a kernel $K$ on a base graph $G$. Let
$F=(f_{_{2}},f_{_{3}},\ldots ,f_{_{n+1}})$ be a list of
eigenfunctions of $\siml$ for the list of eigenvalues $\Gamma
=(\lambda_{_{2}},\lambda_{_{3}},\ldots ,\lambda_{_{n+1}})$,
respectively, such that along with  ${\cal
Q}=(Q_{_{2}},Q_{_{3}},\ldots ,Q_{_{n+1}})$ form a compatible
transverse set of functions for $\siml$.
 Then,
\begin{equation}
\overline{\lambda}_{_{n}} \leq
\iota_{_{n}}(G) \leq \sqrt{\frac{2(n+1)}{n}\
\overline{\lambda}_{_{n+1}}}.
\end{equation}
\end{thm}
Moreover, we would like to add that
following the same scenario described for the mean version, one may define the $n$th max-isoperimetric constant as
$$\varsigma_{_n}(K,\pi) \isdef \displaystyle{\min_{_{ \{Q_{_{i}}\}^{^{n}}_{_{1}} \in  {\cal
D}_{_{n}}(G) }} } \ \left ( \displaystyle{\max_{_1\leq i\leq n}}\
\frac{\dirp (Q_{_{i}})}{\pi(Q_{_{i}})} \right ).$$
It is noteworthy that all of the previous mentioned results such as the
Federer-Fleming theorem can also be verified for this version with appropriate modifications.
For instance, we may state a more standard
Cheeger inequality  for the max-isoperimetric constant $\varsigma_{_n}$ using Theorems~\ref{CHEEGER1} and \ref{CHEEGER2} and their counterparts, along with  Courant-Fischer variational theorem as follows.
\begin{thm}\label{CORCHEEGER2}
For a given graph $G$, let $f$ be an eigenfunction of $\siml$
corresponding to the $n$th eigenvalue $\lambda_{_n}$. Also, let
$(Q_{_{1}},Q_{_{2}},\ldots ,Q_{_{n}})$ be a list of $n$ disjoint nonempty
subsets of $V(G)$ such that for every $1 \leq i \leq n$ we have $f|_{_{Q_{_{i}}}} \not = 0$ and each $Q_{_{i}}$ is a nonnegative or
nonpositive bipolar part of $f$. Then,
\begin{equation}\label{CHEEGEREQ1}
\frac{\lambda_{_n}}{2} \leq \varsigma_{_n}(G)\leq \sqrt{2\ \lambda_{_n}}, \quad {\rm and} \quad  \overline{\lambda}_{_n} \leq \iota_{_n}(G)\leq \sqrt{2\ \lambda_{_n}}.
\end{equation}
\end{thm}
Also, as a corollary of Theorem~\ref{CORCHEEGER2} by considering the fact that always the second eigenvalue has an eigenfunction with two nodal domains, we obtain the classical Cheeger inequality as,
\begin{equation}\label{CHEEGERCLASSIC1}
\frac{\lambda_{_{2}}}{2} \leq\iota_{_2}(G)\leq \varsigma_{_{2}}(G) \leq \sqrt{2\
\lambda_{_{2}}}.
\end{equation}
It also must be emphasized that a direct use of eigenvalues and
eigenfunctions (not necessarily tuned with repetition) in
Theorem~\ref{CHEEGER2} will definitely
make a deviation from sharpness which can be easily verified by a
comparison to the classical Cheeger inequality
(Inequality~(\ref{CHEEGERCLASSIC1})). Note that the classical
Cheeger inequality is far from being sharp by a recent
result of Montenegro and Tetali~\cite{MT06}.\\
To provide some examples let us recall the following result.
\begin{alphthm}{\rm \cite{BIY03,BLS07}}
Let $K$ be a kernel on a tree $T$ and let $f_{_{n}}$ be an eigenfunction of $\siml$ with eigenvalue $\lambda_{_n}$ which does not
vanish on any vertex. Then $\lambda_{_n}$ is simple and  $f_{_{n}}$ has exactly $n$ strong nodal domains.
\end{alphthm}
Therefore, a generalized Cheeger inequality  is valid for any Markov chain on a tree $T$ with a nowherezero eigenfunction
$f_{_{n}}$ of an eigenvalue $\lambda_{_n}$, i.e.
$$\min (\overline{\lambda}_{_n},\frac{\lambda_{_n}}{2}) \leq \iota_{_{n}}(T)  \leq  \varsigma_{_n}(T)\leq \sqrt{2\ \lambda_{_n}}.$$
For more on the extensive literature of Markov chains on trees the interested reader is referred to
\cite{BLS07,MIC08} and references therein.\\
On the other hand, it is quite interesting that even for the case of trees we do not know enough about the behavior of parameters discussed
in this article, and as Example~\ref{NONGEO} shows one encounters nongeometric trees in very small cases. Hence, we believe that the following
problem can be considered to be a nice starting point for the study of supergeometric graphs.\\
\begin{prb}
Characterize the class of  supergeometric trees.
\end{prb}
\subsection{Algorithmic considerations}\label{KMEANS}
In this section we touch on some algorithmic aspects of the isoperimetry problem and we study its relationships to some well-known concepts as the $k$-means algorithm and the normalized cuts method. This section is mainly influenced by the seminal contribution of J.~Malik and J.~Shi \cite{SIMA00}
(also see \cite{DGK04})
that was brought to our attention after the presentation of the first two authors' article on the isoperimetric spectrum of graphs \cite{DAHAARXIV}.\\
Following our notations in Section~\ref{ISOSPEC},  for a set $X$, ${\cal D}_{_n}(X)$
stands for the set of all $n$-sets $\{Q_{_i}\}_1^n$, where $Q_{_i}$'s are nonempty disjoint subsets of $X$. Also ${\cal P}_{_n}(X) \subseteq {\cal D}_{_n}(X)$ consists of all $n$-partitions of $X$.
\begin{defin}{ Given a  function $f\in {\cal F}^d(X)$ and a weight function $\omega:X\to \mathbb{R}^+-\{0\}$,
for every $1\leq n\leq |X|$, the cost function
${\cal C}^{^{f,\omega}}_{_n}: {\cal D}_{_n}(X) \to \mathbb{R}^+$ is
defined as follows
\[{\cal M}^{^{f,\omega}}_{_n}(\{Q_{_i}\}_1^n)\isdef\sum_{i=1}^n\sum_{u\in Q_{_i}} \omega(u)\|f(u)-\mathbf{m}_{_i}\|^2,
\mbox{ where }\mathbf{m}_{_i}\isdef\frac{\sum_{u\in
Q_{_i}}\omega(u)f(u)}{\sum_{u\in Q_{_i}}\omega(u)},\]
 and
\[{\cal C}^{^{f,\omega}}_{_n}(\{Q_{_i}\}_1^n) \isdef  {\cal M}^{^{f,\omega}}_{_n}(\{Q_{_i}\}_1^n)+\sum_{u\in Q^{^*}} \omega(u)\|f(u)\|^2,\]
where  $\|.\|$ is the Euclidean $L^2$-norm of $\mathbb{R}^d$ and $Q^{^*}=X - \cup_{i=1}^n Q_{_i}$.
Also, associated to the functions $f$ and $\omega$, we define the following
parameters
$${\mu}_{_n}(f,\omega) \isdef \min_{{\cal Q}\in {\cal D}_{_n}(X)}\ {\cal C}^{^{f,\omega}}_{_n}({\cal Q}), \quad {\rm and} \quad
\tilde{\mu}_{_n}(f,\omega)\isdef \min_{{\cal Q}\in {\cal
P}_{_n}(X)}\ {\cal C}^{^{f,\omega}}_{_n}({\cal Q}).$$
 }\end{defin}
The well-known $k$-means algorithm seeks for the value of $\tilde{\mu}_{_n}(f,\omega)$ and an $n$-partition on which the minimum is achieved.
First, let's state the following simple lemma.
\begin{lem}\label{LEMKMEANS}
Given functions $f\in {\cal
F}^d(X)$ and  $\omega:X\to \mathbb{R}^+-\{0\}$  on $X$, for all $1\leq n\leq |X|$ and for every ${\cal Q}=\{Q_{_i}\}_1^n\in {\cal D}_{_n}(X)$, we have
\[{\cal M}^{^{f,\omega}}_{_n}({\cal Q})=\sum_{i=1}^n\frac{1}{2\ \omega(Q_{_i})}\sum_{u,v\in Q_{_i}} \omega(u)\omega(v)\|f(u)-f(v)\|^2.\]
\end{lem}
\begin{proof}{
\begin{eqnarray*}
{\cal M}^{^{f,\omega}}_{_n}({\cal Q})&=&
\sum_{i=1}^n\frac{1}{\omega(Q_{_i})^2}\sum_{u\in Q_{_i}}
\omega(u)\left \|\sum_{v\in Q_{_i}}(f(u)-f(v))\ \omega(v)\right
\|^2\\&=&\sum_{i=1}^n\frac{1}{\omega(Q_{_i})^2}\sum_{u,v,w\in
Q_{_i}} \omega(u)\omega(v)\omega(w)\ \Big( \langle
f(u),f(u)\rangle-\langle f(v),f(u)\rangle\\&&\hspace{-3mm}-\langle
f(w),f(u)\rangle+\langle
f(v),f(w)\rangle\Big)=\sum_{i=1}^n\frac{1}{2\
\omega(Q_{_i})}\sum_{u,v\in Q_{_i}}
\omega(u)\omega(v)\|f(u)-f(v)\|^2.
\end{eqnarray*}
}\end{proof}
Let $G$ be a graph on $\nu$ vertices  and
$K$ be a kernel on it, together with a stationary
distribution $\pi$. Also, let $D$ be the diagonal matrix defined as $D(u,u) \isdef \pi(u)$. Define, the {\it normalized flow matrix} as,
$${\Phi}\isdef(id+\overline{K})D^{-1},$$
whose $(u,v)$ entry can be described as
$$\Phi(u,v)= \left \{ \begin{array}{ll}
\frac{\overline{\phi}(u,v)}{\pi(u)\pi(v)} & u \not = v,  \\
&\\
\frac{1}{\pi(u)}+\frac{\overline{\phi}(u,u)}{\pi(u)^2}  & u =v, \end{array}\right. $$
 that justifies the name.  Now, if $f$ is an eigenfunction for the eigenvalue $\lambda$ of $\Phi$, and we chose $x$ in such a way that
 $\frac{f(x)}{\pi(x)}=\max_u\frac{|f(u)|}{\pi(u)}$, then
$$\lambda f(x)=(\Phi f)(x)=\frac{f(x)}{\pi(x)}+\sum_u \frac{\overline{K}(x,u)}{\pi(u)} f(u)
\geq\frac{f(x)}{\pi(x)}\left(1-\sum_u\overline{K}(x,u)\right)=0,$$
which shows that  $\Phi$ is a positive semidefinite matrix, and consequently,  there exists a
matrix $P$ such that $\Phi=P^tP$. Let us define the function
$p_{_K}\in{\cal F}^{\nu}(G)$ such that $p_{_K}(u)$ is $u$th
column of $P$.
\begin{pro}
For every graph $G$ on $\nu$ vertices and a kernel $K$ on it with a nowherezero stationary
distribution $\pi$, the following equations hold for all $1\leq n\leq \nu$,
\begin{eqnarray}
{\mu}_{_n}(p_{_K},\pi)=\nu-2n+\tr(K)+n\
{\iota}_{_n}(G),\label{KM-ISO}\\[2mm]
\tilde{\mu}_{_n}(p_{_K},\pi)=\nu-2n+\tr(K)+n\
\tilde{\iota}_{_n}(G).\nonumber
\end{eqnarray}
\end{pro}
\begin{proof}{
Let ${\cal Q}=\{Q_{_i}\}_1^n\in {\cal D}_{_n}(G)$ be chosen
arbitrary and let $Q^{^*}=V(G)- \cup_{i=1}^n Q_{_i}$. By
Lemma~\ref{LEMKMEANS},
\begin{eqnarray*}
{\cal C}^{^{p_K,\pi}}_{_n}({\cal Q})&=&
\sum_{i=1}^n\frac{1}{2\ \pi(Q_i)}\sum_{u,v\in Q_{_i}} \pi(u)\pi(v)\|p_{_K}(u)-p_{_K}(v)\|^2+\sum_{u\in Q^{^*}}\pi(u)\|p_{_K}(u)\|^2\\
&=&\sum_{u\in V(G)}\pi(u)\langle
p_{_K}(u),p_{_K}(u)\rangle-\sum_{i=1}^n\sum_{u,v\in
Q_{_i}}\frac{\pi(u)\pi(v)\langle
p_{_K}(u),p_{_K}(v)\rangle}{\pi(Q_{_i})}\\
&=&\sum_{u\in V(G)}(1+K(u,u))-\sum_{i=1}^n\sum_{u\in
Q_{_i}}\frac{\pi(u)}{\pi(Q_{_i})}-\sum_{i=1}^n\sum_{u,v\in
Q_{_i}}\frac{\pi(u)\overline{K}(u,v)}{\pi(Q_{_i})}\\
&=&\nu+\sum_{u\in V(G)} K(u,u)-n-\sum_{i=1}^n\frac{\pi(Q_{_i})-\dirp(Q_{_i})}{\pi(Q_{_i})}\\
&=&\nu-2n+\tr(K)+\sum_{i=1}^n\frac{\dirp(Q_{_i})}{\pi(Q_{_i})}.
\end{eqnarray*}
Now, the equations follow by taking minimum  over ${\cal D}_{_n}(G)$
and  ${\cal P}_{_n}(G)$.
 }\end{proof}
This results along with
Theorem~\ref{IOTAEQGAMMA} shows that the target of the standard
$k$-means algorithm is not theoretically well-justified and must
be redefined to be the set of {\it disjoint} subsets for which
the minimum of $C^{^{f,\omega}}_{_n}$ is achieved. Besides,
note that the left side of Equation~(\ref{KM-ISO}) is always
nonnegative, and consequently, one finds a lower bound for
$\iota_{_n}(G)$ as follows, which is good when $n$ is large.
\begin{cor}
Let $G$ be a graph on $\nu$ vertices and a kernel  $K$ on it with a nowherezero  stationary
distribution $\pi$. Then for every integer $1\leq n\leq \nu$ we have
\[{\iota}_{_n}(G)\geq 2-\frac{\nu+\tr(K)}{n}.\]
\end{cor}
It should be noted that the set of functions
$\{p_{_K}(u) \ \ | \ \ u \in V(G)\}$ constitutes an orthogonal
representation  for $G$ \cite{LO79}. These relations along with
relationships of the subject to the theory of weakly unitary
invariant norms (e.g. see \cite{BAH97}), convex analysis on
Hermitian matrices, and
 applications of semidefinite programming to the
 approximation problem are among areas that ought to be considered  in forthcoming research.

\vspace*{.5cm}\ \\
\noindent
 {\bf Acknowledgement} \ \\ \ \\
The authors wish to express their sincere thanks to P.~Diaconis,
A.~Mehrabian, L.~Saloff-Coste, M.~Shahshahani and an anonymous referee
for their encouragement and many useful comments. They also would like to thank
L.~Miclo who drew their attention to his article \cite{MIC08}.
\ \\ \\

\end{document}